\definecolor{black}{rgb}{0.0, 0.0, 0.0}
\definecolor{red}{rgb}{1.0, 0.5, 0.5}
\newcommand{\margnote}[1]{
\ifthenelse{\boolean{shownotes}}%
{\marginpar{\raggedright\tiny\texttt{#1}}}%
{}%
}
\newcommand{\hole}[1]{
\ifthenelse{\boolean{shownotes}}%
{\begin{center} \fbox{ \rule {.25cm}{0cm} \rule[-.1cm]{0cm}{.4cm}
\parbox{.85\textwidth}{\begin{center} \texttt{#1}\end{center}} \rule
{.25cm}{0cm}}\end{center}} {} }
\title[On well/ill-posedness of $\alpha$-SQG equations in H\"older spaces]{On well/ill-posedness for the generalized surface quasi-geostrophic equations in H\"older spaces}
\author[Choi]{Young-Pil Choi}
\address[Young-Pil Choi]{\newline Department of Mathematics\newline
Yonsei University, 50 Yonsei-Ro, Seodaemun-Gu, Seoul 03722, Republic of Korea}
\email{ypchoi@yonsei.ac.kr}
\author[Jung]{Jinwook Jung}
\address[Jinwook Jung]{\newline Department of Mathematics and Research Institute for Natural Sciences \newline
Hanyang University, 222 Wangsimni-ro, Seongdong-gu, Seoul 04763, Republic of Korea}
\email{2jwook12@gmail.com}
\author[Kim]{Junha Kim}
\address[Junha Kim]{\newline Department of Mathematics
\newline Ajou University, 206 Worldcup-ro, Yeongtong-gu, Suwon 16499, Republic of Korea}
\email{junha02@ajou.ac.kr}
\numberwithin{equation}{section}
\newtheorem{theorem}{Theorem}[section]
\newtheorem{lemma}{Lemma}[section]
\newtheorem{corollary}{Corollary}[section]
\newtheorem{remark}{Remark}[section]
\newcommand{\calC}{\mathcal C}
\newcommand{\R}{\mathbb R}
\newcommand{\N}{\mathbb N}
\newcommand{\ls}{\lesssim}
\newcommand{\mc}{\mathcal C}
\newcommand{\bq}{\begin{equation}}
\newcommand{\eq}{\end{equation}}
\newcommand{\e}{\varepsilon}
\newcommand{\lt}{\left}
\newcommand{\rt}{\right}
\newcommand{\pa}{\partial}
\newcommand{\intr}{\int_{\R^2}}
\def\moverlay{\mathpalette\mov@rlay}
\def\mov@rlay#1#2{\leavevmode\vtop{%
   \baselineskip\z@skip \lineskiplimit-\maxdimen
   \ialign{\hfil$\m@th#1##$\hfil\cr#2\crcr}}}
\newcommand{\charfusion}[3][\mathord]{
    #1{\ifx#1\mathop\vphantom{#2}\fi
        \mathpalette\mov@rlay{#2\cr#3}
      }
    \ifx#1\mathop\expandafter\displaylimits\fi}
\begin{document}
\allowdisplaybreaks

\date{\today}

\keywords{Surface quasi-geostrophic equation, ill-posedness, well-posedness, H\"older space.}

\begin{abstract} We establish the well/ill-posedness theories for the inviscid $\alpha$-surface quasi-geostrophic ($\alpha$-SQG) equations in H\"older spaces, where $\alpha  = 0$ and $\alpha = 1$ correspond to the two-dimensional Euler equation in the vorticity formulation and SQG equation of geophysical significance, respectively. We first prove the local-in-time well-posedness of $\alpha$-SQG equations in $\calC([0,T);\calC^{0,\beta}(\R^2))$ with $\beta \in (\alpha,1)$ for some $T>0$. We then analyze the strong ill-posedness in $\calC^{0,\alpha}(\R^2)$ constructing smooth solutions to the $\alpha$-SQG equations that exhibit $\calC^{0,\alpha}$--norm growth in a short time. In particular, we develop the nonexistence theory for $\alpha$-SQG equations in $\calC^{0,\alpha}(\R^2)$.
\end{abstract}

\maketitle \centerline{\date}

\tableofcontents

%
%
%
%
\section{Introduction}\label{sec:intro}
In this paper, we consider the following inviscid $\alpha$-surface quasi-geostrophic ($\alpha$-SQG) equations in two dimensions:
\begin{align}\label{main_eq}
\begin{aligned}
&\pa_t \theta + u \cdot \nabla \theta = 0, \quad x \in \R^2, \ t > 0,\cr
& u = \nabla^\perp (-\Delta)^{-1 + \frac\alpha2}\theta
\end{aligned}
\end{align}
subject to the initial data
\bq\label{main_ini}
\theta(0,x) = \theta_0(x), \quad x \in \R^2
\eq
for $0 \leq \alpha \leq 2$, where $\nabla^\perp$ denotes the perpendicular gradient, i.e. $\nabla^\perp = (\pa_{x_2}, - \pa_{x_1})$. Here, the velocity field $u$ can be written as
\[
u(t,x) = K_\alpha \star \theta(t,x) = c_\alpha \intr \frac{(x-y)^\perp}{|x-y|^{2+\alpha}} \theta(t,y)\,dy
\]
for some $c_\alpha > 0$. For simplicity, we set $c_\alpha =1$ throughout the paper. The cases $\alpha  = 0$ and $\alpha = 1$ correspond to the two-dimensional Euler equations in the vorticity formulation and SQG equations of geophysical significance, respectively.

In the last decade, there have been significant developments in the ill-posedness and well-posedness theories of the $\alpha$-SQG equations.  To put our study in the proper perspective, we first recall a few of the references from the considerable amount of literature available on the well/ill-posedness theories within the framework of the $H^s$ Sobolev space. The local-in-time well-posedness in $H^s(\R^2)$ spaces with $s > \alpha+1$ is established in \cite{CCCGW12} for $\alpha \in (0,2)$. In \cite{CW12} and \cite{JLM22}, the local well-posedness of the logarithmic inviscid regularization of SQG equations is obtained in the borderline Sobolev space $H^{\alpha + 1}(\R^2)$ with $\alpha \in [0,1]$ and $\alpha \in (1,2)$, respectively. More recently, in \cite{CM22, JK24}, strong ill-posedness in the borderline space is studied in the case of $\alpha = 1$, see also \cite{BSV19}. Even though there have been rigorous constructions of non-trivial global-in-time solutions 
\cite{ADdMW21, CQZZ23, CCG19, CCG20, GS17, HH15, HK21, HHH16}, the question of whether finite-time singularities emerge from initial data in $H^s(\R^2)$ for $s > \alpha+1$ remains open. Beyond the Sobolev spaces, there are results on ill-posedness and well-posedness theories in $\calC^{k,\beta}(\R^2)$ spaces. Local well-posedness in $\calC^{k,\beta} \cap L^q$ with $k \geq 1$, $\beta \in (0, 1)$, with $q > 1$ has been established for SQG \cite{Wu05}, with recent improvements removing the $L^q$ regularity requirement \cite{ACEK23}. On the other hand, it was established by \cite{JK24} and \cite{CM22} that the SQG equations $(\alpha = 1)$ is strong ill-posedness in $\calC^1(\mathbb{R}^2)$ and $\calC^k(\mathbb{R}^2)$ with $k \geq 2$, respectively. After that, the strong ill-posedness in $\calC^{k,\beta}$ with $k \geq 1$, $\beta \in (0,1]$, and $k+\beta > \alpha$ for the $\alpha$-SQG equations \eqref{main_eq} with $\alpha \in (1,2)$ is analyzed in \cite{CMpre}. 

Despite these developments, to the best of our knowledge, the ill-posedness and well-posedness theories for the $\alpha$-SQG equations \eqref{main_eq} have not been well explored in the H\"older spaces $\calC^{0,\beta}(\R^2)$, see \cite{JKYpre} where the well-posedness of \eqref{main_eq} in the half-plane is studied in certain weighted anisotropic H\"older spaces. In the current work, we establish the well/ill-posedness for the $\alpha$-SQG equations \eqref{main_eq} with $\alpha \in (0,1)$. 

Our first result shows the solution to \eqref{main_eq} is locally well-posed in the H\"older space $\calC^{0,\beta}(\R^2)$ with $\beta \in (\alpha,1)$.

\begin{theorem}[Well-posedness in $\calC^{0,\beta}(\R^2)$ with $\beta \in (\alpha,1)$]\label{thm_main} Suppose that the initial data $\theta_0$ satisfies
\[
\theta_0 \in L^1 \cap \calC^{0,\beta}(\R^2), \quad \beta \in (\alpha,1).
\]
Then there exists $T>0$, depending only on $\|\theta_0\|_{L^1 \cap \calC^{0,\beta}(\R^2)}$ and $\alpha$, and a unique solution $\theta \in \calC([0,T);  L^1 \cap \calC^{0,\beta}(\R^2))$ to  \eqref{main_eq}-\eqref{main_ini}, which is stable in the little H\"older space $\calC([0,T);c^{0,\beta}(\R^2))$ with compact supports,  in the sense of distributions. Moreover, if $\theta_1$ and $\theta_2$ are two such solutions on the time interval $[0,T]$ corresponding to the initial data $\theta_{1,0}$ and $\theta_{2,0}$, respectively, then we have
\[
\sup_{0 \leq t \leq T}\|(\theta_1 - \theta_2)(t)\|_{\dot H^{\alpha - 1}} \leq \|\theta_{1,0} - \theta_{2,0}\|_{\dot H^{\alpha - 1}}
\]
for some $C>0$. 
\end{theorem}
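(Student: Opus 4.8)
The plan is to run the classical scheme for an active scalar transported by a drift that is slightly more than one derivative smoother than the scalar: (i) velocity estimates, (ii) a priori bounds closed by the Lagrangian flow, (iii) existence by smooth approximation and compactness, (iv) uniqueness and stability via an energy estimate in the weak norm $\dot H^{\alp-1}$. The technically decisive ingredient is (i): for $\theta\in L^1\cap\calC^{0,\bt}(\R^2)$ with $\bt\in(\alp,1)$ the velocity $u=K_\alp\star\theta$ satisfies
\[
\|u\|_{L^\infty}+\|\nabla u\|_{L^\infty}+[\nabla u]_{\calC^{0,\bt-\alp}}\ls_{\alp}\|\theta\|_{L^1\cap\calC^{0,\bt}}.
\]
The $L^\infty$ bound follows by splitting $|K_\alp(z)|\sim|z|^{-1-\alp}$ at $|z|=1$: the near field is integrable ($1+\alp<2$) and controlled by $\|\theta\|_{L^\infty}$, the far field by $\|\theta\|_{L^1}$. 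For $\nabla u$, the kernel $\nabla K_\alp$ is homogeneous of degree $-2-\alp$, hence too singular to pair with an $L^\infty$ function, and one uses the cancellation $\nabla u(x)=\mathrm{p.v.}\!\int_{|x-y|\le1}\nabla K_\alp(x-y)\,(\theta(y)-\theta(x))\,dy+(\text{regular terms})$; the displayed integral converges because $|x-y|^{\bt-2-\alp}$ is locally integrable \emph{exactly when $\bt>\alp$}, and second differences upgrade this to $\nabla u\in\calC^{0,\bt-\alp}$. Thus the drift is Lipschitz, indeed $\calC^{1,\bt-\alp}$, and this is where the hypothesis $\bt>\alp$ enters the existence theory.

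For the a priori bound and existence: as $u=\nabla^\perp\psi$ is divergence free, for a smooth solution the flow $\Phi_t$ of $u$ exists and preserves Lebesgue measure, and $\theta(t)=\theta_0\circ\Phi_t^{-1}$; hence $\|\theta(t)\|_{L^p}=\|\theta_0\|_{L^p}$ and $[\theta(t)]_{\calC^{0,\bt}}\le[\theta_0]_{\calC^{0,\bt}}\exp(\bt\int_0^t\|\nabla u(s)\|_{L^\infty}\,ds)$, which together with $\|\nabla u(t)\|_{L^\infty}\ls_{\alp}\|\theta_0\|_{L^1}+[\theta(t)]_{\calC^{0,\bt}}$ gives a closed integral inequality for $[\theta(t)]_{\calC^{0,\bt}}$ that stays finite on $[0,T]$ for some $T=T(\|\theta_0\|_{L^1\cap\calC^{0,\bt}},\alp)$. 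For existence I would mollify the data, $\theta_0^\eps=\rho_\eps\star\theta_0$, solve \eqref{main_eq} for each $\eps$ (smooth data gives a smooth solution), and use this bound uniformly in $\eps$; since $\pa_t\theta^\eps=-\nabla\cdot(u^\eps\theta^\eps)$ with $u^\eps\theta^\eps$ bounded in $L^1\cap L^\infty$, the family $\{\theta^\eps\}$ is equicontinuous in $t$ into $W^{-1,\infty}_{loc}$, so with the uniform $\calC^{0,\bt}$ bound and the compact embedding $\calC^{0,\bt}\hookrightarrow\hookrightarrow\calC^{0,\bt'}_{loc}$ ($\bt'<\bt$) a standard compactness argument produces a subsequence converging in $\calC([0,T];\calC^{0,\bt'}_{loc})$ to some $\theta\in L^\infty([0,T];L^1\cap\calC^{0,\bt})$; the velocities $u^\eps=K_\alp\star\theta^\eps$ then converge locally uniformly, which lets one pass to the limit in the distributional formulation. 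Since the limit again solves a transport equation with Lipschitz drift, $\theta(t)=\theta_0\circ\Phi_t^{-1}$, whence $\theta\in\calC([0,T];L^1\cap\calC^{0,\bt})$; and finite speed of propagation ($\|u\|_{L^\infty}$ bounded) yields the compact-support statement.

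For uniqueness and the $\dot H^{\alp-1}$--stability, set $\Lambda=(-\Delta)^{1/2}$, $\dlt\theta=\theta_1-\theta_2$, $\dlt u=u_1-u_2$, and $v=\Lambda^{\alp-1}\dlt\theta$; since $\dlt u=\nabla^\perp\Lambda^{\alp-2}\dlt\theta$ has Fourier multiplier of modulus $|\xi|^{\alp-1}$, one has $\|v\|_{L^2}=\|\dlt u\|_{L^2}=\|\dlt\theta\|_{\dot H^{\alp-1}}$, finite for $\alp>0$ because $\theta_i\in L^1\cap L^\infty$. From $\pa_t\dlt\theta=-\nabla\cdot(u_1\dlt\theta)-\nabla\cdot(\theta_2\dlt u)$ one obtains, after using $\nabla\cdot u_1=0$ to cancel the term $\langle v,u_1\!\cdot\!\nabla v\rangle$,
\[
\tfrac12\tfrac{d}{dt}\|v\|_{L^2}^2=\underbrace{\big\langle v,\,[\Lambda^{1-\alp},u_1]\!\cdot\!\nabla\Lambda^{\alp-1}v\big\rangle}_{I}+\underbrace{\big\langle\theta_2\,\dlt u,\,\nabla\Lambda^{\alp-1}v\big\rangle}_{II}.
\]
For $I$ one uses the Calder\'on commutator estimate $\|[\Lambda^{1-\alp},u_1]w\|_{L^2}\ls\|\nabla u_1\|_{L^\infty}\|w\|_{\dot H^{-\alp}}$ (valid since $1-\alp\in(0,1)$) and $\|\nabla\Lambda^{\alp-1}v\|_{\dot H^{-\alp}}=\|\Lambda^{-1}\nabla v\|_{L^2}\ls\|v\|_{L^2}$, so $|I|\ls\|\nabla u_1\|_{L^\infty}\|v\|_{L^2}^2$. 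For $II$, writing $\nabla\Lambda^{\alp-1}v$ as a fixed rotation of $\Lambda^\alp\dlt u$ and expanding $\Lambda^\alp\dlt u(x)=c_\alp\int(\dlt u(x)-\dlt u(y))\,|x-y|^{-2-\alp}\,dy$, the identity $a\cdot a^\perp=0$ allows symmetrization in $x\leftrightarrow y$ and the bound $|II|\ls\iint|\theta_2(x)-\theta_2(y)|\,|\dlt u(x)|\,|\dlt u(y)|\,|x-y|^{-2-\alp}\,dx\,dy$; splitting at $|x-y|=1$, H\"older continuity of $\theta_2$ near the diagonal ($|x-y|^{\bt-2-\alp}$ locally integrable since $\bt>\alp$) and boundedness of $\theta_2$ away from it ($|x-y|^{-2-\alp}$ integrable at infinity since $\alp>0$) give $|II|\ls\|\theta_2\|_{\calC^{0,\bt}}\|v\|_{L^2}^2$. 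As $\|\nabla u_1\|_{L^\infty}$ and $\|\theta_2\|_{\calC^{0,\bt}}$ are bounded on $[0,T]$ by the a priori estimate, Gr\"onwall yields $\sup_{[0,T]}\|\dlt\theta(t)\|_{\dot H^{\alp-1}}\ls\|\dlt\theta(0)\|_{\dot H^{\alp-1}}$, hence uniqueness. Stability in $c^{0,\bt}$ then follows by combining this quantitative bound with the uniform $\calC^{0,\bt}$ bounds and the representation $\theta(t)=\theta_0\circ\Phi_t^{-1}$, using that a \emph{little}-H\"older function composed with uniformly convergent flows converges in $\calC^{0,\bt}$ norm --- which fails in the full space $\calC^{0,\bt}$, explaining the restriction to $c^{0,\bt}$.

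The main obstacle is the last step: the natural energy for the difference is $\dot H^{\alp-1}$, which lies below $L^2$ for $\alp<1$, so the transport term $I$ does not close by a direct $L^2$ pairing and one is forced to extract a commutator and invoke a sharp Calder\'on-type estimate --- precisely the place where the Lipschitz (indeed $\calC^{1,\bt-\alp}$) regularity of the drift, hence $\bt>\alp$, is indispensable. A secondary point is the rigorous justification of the energy identity (that $\dlt\theta$ stays in $\dot H^{\alp-1}$ along the evolution and the integrations by parts are legitimate), which is handled by performing the estimate first on the smooth approximations and then passing to the limit.
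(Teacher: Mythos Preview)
Your proposal is correct and follows the same overall architecture as the paper --- Lipschitz velocity from $\beta>\alpha$, Lagrangian a~priori bounds, existence by approximation and compactness, uniqueness via a $\dot H^{\alpha-1}$ energy estimate, stability in $c^{0,\beta}$ --- but the technical implementation of the last two steps differs. For uniqueness the paper does not set up the Calder\'on commutator $[\Lambda^{1-\alpha},u_1]$; instead it invokes a black-box ``modulation'' estimate (Lemma~\ref{lem_mod}, from \cite{CJ1}) that directly absorbs the transport contribution into $C\|\theta_1-\theta_2\|_{\dot H^{\alpha-1}}^2$, and then treats the remaining term $\int\theta_1(u_1-u_2)\cdot\nabla\Lambda^{2\alpha-2}(\theta_1-\theta_2)\,dx$ by a fractional Leibniz rule (Lemma~\ref{frac_leib}) together with the pointwise bound $\|\Lambda^\alpha\theta_1\|_{L^\infty}\lesssim\|\theta_1\|_{\calC^{0,\beta}}$. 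Your symmetrization via $a\cdot a^\perp=0$ and the singular-integral representation of $\Lambda^\alpha$ is a genuinely different and more self-contained treatment of the same term; both routes use $\beta>\alpha$ at the same place (local integrability of $|z|^{\beta-2-\alpha}$). For stability the paper does not argue through flow convergence: it mollifies the \emph{solutions}, uses that $\theta\in c^{0,\beta}$ forces $\|\theta-\phi_\nu\!\star\!\theta\|_{\calC^{0,\beta}}\to0$, and then interpolates $\|f\|_{\calC^{0,\beta}}\lesssim\|f\|_{\dot H^2}^{a}\|f\|_{\dot H^{\alpha-1}}^{1-a}$ on the mollified difference. Your flow-composition sketch is plausible but would still need to extract uniform convergence of the flows from smallness of $\|\delta\theta\|_{\dot H^{\alpha-1}}$, a step the paper's interpolation argument sidesteps. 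A minor further difference: the paper regularizes the \emph{kernel}, $K_\alpha^\varepsilon(x)=x^\perp/(|x|^2+\varepsilon)^{(2+\alpha)/2}$, rather than only the data, which yields global-in-time approximate solutions and avoids appealing to a continuation criterion.
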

\begin{remark}Note that $L^1 \cap L^\infty(\R^2) \hookrightarrow  \dot{H}^{\alpha-1}(\R^2)$. Indeed, we have
\[\begin{aligned}
\|\theta\|_{\dot{H}^{\alpha-1}}^2 &= \intr \theta \Lambda^{-2+2\alpha} \theta\,dx\\
&= \intr \theta K_{2(1-\alpha)}\star\theta\,dx\\
&= \lt(\intr\int_{\{|x-y|\le 1\}} + \intr\int_{\{|x-y|\ge 1\}}\rt) \frac{\theta(x)\theta(y)}{|x-y|^{2\alpha}}\,dydx\\
&\le C \intr |\theta(x)| \| |x|^{-2\alpha} \mathds{1}_{|x|\le 1}\|_{L^1}\|\theta\|_{L^\infty}\,dx + C\|\theta\|_{L^1}^2\\
&\le C\|\theta\|_{L^1 \cap L^\infty}^2
\end{aligned}\]
for some $C>0$, where $\Lambda = (-\Delta)^\frac12$.
\end{remark}

\begin{remark}Clearly, the well-posedness theory established in Theorem \ref{thm_main} holds for $\beta = 1$.
\end{remark}

 The proof of Theorem \ref{thm_main} relies on the Lagrangian approach. For this, we need to show that the characteristics associated with \eqref{main_eq} are well-defined. Note that the velocity fields $u$ can be regarded as $\Lambda^{-(1-\alpha)}\theta$ in terms of regularity. This gives $\nabla \Lambda^{-(1-\alpha)}\theta \sim |\nabla|^\alpha \theta$ and thus it seems impossible to obtain the Lipschitz or log-Lipschitz continuity of the velocity fields, which provides the well-definedness of the characteristics, by taking into account the only bounded solution $\theta$. This observation highlights that the H\"older continuous solution $\theta$ with exponent $\alpha$ is in the borderline solution space. By investigating a cancellation structure for the Lipschitz estimate of $\Lambda^{-(1-\alpha)}\theta$, we demonstrate that the velocity fields $u$ is log-Lipscthiz continuous when $\theta \in \calC^{0,\alpha}(\R^2)$ and Lipschitz continuous when $\theta \in \calC^{0,\beta}(\R^2)$ with $\beta \in (\alpha,1)$. In particular, we find that the H\"older regularity of solutions is propagated in time when the velocity fields are Lipschitz. Using classical approximation arguments, we then construct the H\"older continuous solution $\theta \in \calC^{0,\beta}(\R^2)$ with $\beta \in (\alpha,1)$. Regarding the uniqueness of solutions, direct estimates of solutions in the H\"older space present technical challenges. Thus, we navigate those difficulties by dealing with the negative Sobolev space, as stated in Theorem \ref{thm_main}, and employing a {\it modulation} technique, inspired by \cite{CJ1}. Finally, establishing well-posedness necessitates ensuring solution stability, for which we utilize the little H\"older space, which is the $\calC^\infty(\R^2)$ closure of the usual H\"older norm.

From the result of Theorem \ref{thm_main}, we naturally wonder whether the solution is well-posed or ill-posed in the {\it borderline} H\"older space $\calC^{0,\alpha}(\R^2)$. Our second main result demonstrates that the initial value problem \eqref{main_eq}-\eqref{main_ini} is strongly ill-posed in $\calC^{0,\alpha}(\R^2)$. In this respect, our well-posedness theory, Theorem \ref{thm_main}, is sharp. 

\begin{theorem}[Ill-posedness in $\calC^{0,\alpha}(\R^2)$]\label{thm:inf}
	For any positive constants $\varepsilon$, $\delta$, and $M$, there exists $\theta_{0} \in  \calC^\infty_c(\R^2) $ satisfying $\|\theta_0\|_{\calC^{0,\alpha}} < \varepsilon$ such that the unique local-in-time smooth solution $\theta$ to \eqref{main_eq} with initial data $\theta_{0}$ blows up at some time $t^* \in (0,\delta)$, or the solution $\theta$ exists on the interval $[0,\delta]$ and satisfies
		$$	\sup_{t\in[0,\delta^*]} \| \theta(t,\cdot)\|_{\calC^{0,\alpha}}>M $$ for some $\delta^* \in (0,\delta]$.
\end{theorem}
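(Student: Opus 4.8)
The plan is to prove Theorem~\ref{thm:inf} by contradiction, exhibiting for the given $\varepsilon,\delta,M>0$ a one–parameter family $\theta_{0}=\theta_{0}^{(\ell)}\in\calC^{\infty}_{c}(\R^{2})$, $\ell\in(0,1)$, with $\|\theta_{0}^{(\ell)}\|_{\calC^{0,\alpha}}<\varepsilon$ uniformly in $\ell$, such that the associated unique smooth solution---which exists and is unique on a maximal interval $[0,T_{\ell})$ by Theorem~\ref{thm_main} applied to smooth data---cannot exist past $\delta/2$ while keeping $\sup_{[0,\delta/2]}\|\theta(t)\|_{\calC^{0,\alpha}}\le M$, once $\ell$ is small. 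This suffices: if $T_{\ell}<\delta$ the first alternative holds (blow-up at $t^{*}=T_{\ell}\in(0,\delta)$), and if $T_{\ell}\ge\delta$ the solution exists on $[0,\delta]$ and, by the contradiction, must exceed $M$ in $\calC^{0,\alpha}$ at some $\delta^{*}\in(0,\delta/2]\subset(0,\delta]$. I work in the symmetry class of data odd in $x_{1}$ and in $x_{2}$ and nonnegative on the open first quadrant $\calQ=\{x_{1}>0,x_{2}>0\}$: these are preserved by \eqref{main_eq}, the axes are invariant under the flow, and since $(-\Delta)^{-1+\alpha/2}\theta$ is then odd in each variable and $\approx c(t)\,x_{1}x_{2}$ near the origin, the velocity is, near $0$, a hyperbolic strain $u(t,x)\approx\lambda(t)(x_{1},-x_{2})$ of rate $\lambda(t)\ge 0$, with the positive $x_{2}$--axis a stable manifold and the positive $x_{1}$--axis an unstable one.

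The datum $\theta_{0}^{(\ell)}$ is taken equal, on $\{\ell\le|y|\le r_{0}\}\cap\calQ$, to $\varepsilon\,|y|^{\alpha}\chi(y/|y|)$ for a fixed $r_{0}>0$ and a fixed nonnegative angular profile $\chi$ vanishing to first order at the axes (extended odd--oddly), and smoothly cut off to zero outside $\{\ell/2\le|y|\le 2r_{0}\}$; one checks $\|\theta_{0}^{(\ell)}\|_{L^{1}\cap\calC^{0,\alpha}}\ls\varepsilon$ uniformly in $\ell$ (the profile $|y|^{\alpha}$ has unit $\calC^{0,\alpha}$--seminorm and each cut-off layer contributes $O(\varepsilon)$), so after normalising constants $\|\theta_{0}^{(\ell)}\|_{\calC^{0,\alpha}}<\varepsilon$. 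The quantitative engine is the $\alpha$--SQG counterpart of the Kiselev--\v{S}ver\'ak hyperbolic--flow estimate: folding the Biot--Savart kernel over the four quadrants (using oddness, so that $u_{2}(t,\cdot)$ vanishes on both axes) exposes, for $x$ on the positive $x_{2}$--axis with $|x|=\rho<r_{0}$,
\[
u_{2}(t,x)\;\le\;-\,c\,\rho\!\!\int_{\{2\rho\le|y|\le r_{0}\}\cap\calQ}\!\!\frac{y_{1}y_{2}}{|y|^{4+\alpha}}\,\theta(t,y)\,dy\;+\;C\rho\big(\|\theta(t)\|_{L^{1}}+\|\theta(t)\|_{\calC^{0,\alpha}}\big),
\]
the kernel $y_{1}y_{2}/|y|^{4+\alpha}$ being nonnegative on $\calQ$; denote the leading integral by $\lambda(t,\rho)$, note it is nonnegative and nonincreasing in $\rho$, that the error is dominated by $\rho\,\lambda(t,\rho)$ once $\rho$ lies below a threshold depending only on $M,\|\theta_{0}\|_{L^{1}},\varepsilon$, and that whenever $\theta(t,\cdot)\gs\varepsilon|y|^{\alpha}$ on a fixed angular sub-sector of $\{\rho\le|y|\le r_{0}\}\cap\calQ$ one obtains the logarithmically large bound $\lambda(t,\rho)\gs\varepsilon\log(r_{0}/\rho)$. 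The exponent $\alpha$ of $|y|$ is \emph{exactly} the threshold at which $\theta_{0}\in\calC^{0,\alpha}$ yet this bound diverges as $\rho\to0$; this is why $\calC^{0,\alpha}$ is critical.

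Granting that the straining stays logarithmically large along the solution, the distortion runs as follows. Assume the solution exists on $[0,\delta/2]$ with $\sup_{[0,\delta/2]}\|\theta(t)\|_{\calC^{0,\alpha}}\le M$; then $\|\theta(t)\|_{L^{1}},\|\theta(t)\|_{L^{\infty}}$ are conserved, $\|u(t)\|_{L^{\infty}}\ls_{M,\|\theta_{0}\|_{L^{1}}}1$, and the $\varepsilon|y|^{\alpha}$--reservoir stays in $\calQ$. The crucial claim is that $\lambda(t,\rho)\gs\varepsilon\log(r_{0}/\rho)$ persists for all $t\in[0,\delta/2]$ and $\rho\in(\ell_{t},r_{0})$, where $\ell_{t}=r_{0}(\ell/r_{0})^{\exp(c\varepsilon t)}$ is the current finest scale: the reservoir is dragged toward the origin along the stable $x_{2}$--axis at the logarithmically large rate while being continuously replenished from larger scales, which compensates its dilution by the angular drift toward the unstable axis. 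Integrating the characteristic equation $\dot\rho=u_{r}\approx-\rho\,\lambda(t,\rho)\approx-c\varepsilon\,\rho\log(r_{0}/\rho)$ near the stable axis shows a fluid element at radius $s_{0}$ is carried to radius $r_{0}(s_{0}/r_{0})^{\exp(c\varepsilon t)}$, so $\theta(t,\cdot)$ near the origin behaves like $\varepsilon\,r_{0}^{\,\alpha(1-e^{-c\varepsilon t})}|y|^{\,\alpha e^{-c\varepsilon t}}$, i.e.\ with a H\"older exponent $\alpha e^{-c\varepsilon t}<\alpha$; evaluating the $\calC^{0,\alpha}$--seminorm at scale $\rho=\ell_{t}$ then gives
\[
\|\theta(t)\|_{\calC^{0,\alpha}}\;\ge\;[\theta(t)]_{\calC^{0,\alpha}}\;\gs\;\varepsilon\,(r_{0}/\ell)^{\,\alpha(e^{c\varepsilon t}-1)}\;\xrightarrow[\ \ell\to0\ ]{}\;\infty
\]
for every fixed $t\in(0,\delta/2]$.

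Taking $t=\delta/2$ and then $\ell$ so small that the right-hand side exceeds $M$ contradicts $\sup_{[0,\delta/2]}\|\theta(t)\|_{\calC^{0,\alpha}}\le M$; by the dichotomy of the first paragraph this proves Theorem~\ref{thm:inf}, and letting $\ell\to0$ along the family moreover produces the single datum $\varepsilon|y|^{\alpha}\chi(y/|y|)\in\calC^{0,\alpha}$, of norm $<\varepsilon$, admitting no $\calC^{0,\alpha}$ solution---the announced nonexistence. The main obstacle is precisely the persistence claim: because the $\alpha$--SQG kernel is more singular than Euler's, the error terms in the hyperbolic--flow estimate and the transport of the reservoir must be controlled carefully, and---the real difficulty---one must show that the hyperbolic angular dynamics do not deplete the mass relevant to $\lambda(t,\rho)$ faster than the inflow along the stable manifold supplies it, with all constants kept independent of $\ell$; this is where the careful design of $\chi$ (and of the cut-off) and a comparison/barrier argument in the spirit of Kiselev--\v{S}ver\'ak, Zlato\v{s}, and the ill-posedness constructions of \cite{CM22,JK24,CMpre} enter. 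A technically lighter variant replaces the bare background by a background plus a tiny high-frequency bump planted near the stable axis, reducing the task to a lower bound on $\int_{0}^{t}\lambda(s,\cdot)\,ds$ rather than on the full profile evolution.
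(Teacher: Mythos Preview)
Your overall framework matches the paper's: odd--odd symmetric data, a hyperbolic--flow estimate of Kiselev--\v{S}ver\'ak type (the paper's Lemma~\ref{key_lem}), and tracking characteristics to produce $\calC^{0,\alpha}$ norm inflation by contradiction. The shape of the argument is right.

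The genuine gap is exactly the one you flag yourself: the persistence claim $\lambda(t,\rho)\gtrsim\varepsilon\log(r_0/\rho)$ for $t\in[0,\delta/2]$ is asserted, not proved. You write that ``this is where the careful design of $\chi$ \ldots\ and a comparison/barrier argument \ldots\ enter,'' but no such argument is given, and it is not clear one exists for your continuous $|y|^{\alpha}$ profile. The difficulty is real: under the hyperbolic flow the angular drift pushes mass from the stable toward the unstable axis, and since the kernel $y_1y_2/|y|^{4+\alpha}$ weights mass near the diagonal most heavily, depletion near the diagonal could kill the logarithmic lower bound before the radial compression supplies new mass. Your heuristic ODE $\dot\rho\approx -c\varepsilon\rho\log(r_0/\rho)$ and the claimed profile $\varepsilon|y|^{\alpha e^{-c\varepsilon t}}$ already \emph{presuppose} the persistence, so the reasoning is circular.

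The paper resolves this with a structurally different datum: a \emph{discrete} sum of disjoint bubbles $\theta_0^{(N)}=\sum_{n=n_0}^{N} n^{-\beta}\theta_{0,loc}^{(n)}$ at dyadic scales. This discretization is not cosmetic; it is what makes the persistence argument close. The paper proves by \emph{induction on the bubble index} that each bubble stays comparable to its initial position for a time $T_n\gtrsim n^{-(1-\beta)}$ (the lemma on comparability of Eulerian and Lagrangian variables), using that the strain felt by bubble $n$ is generated only by the finitely many bubbles with index $<n$, each already controlled by the induction hypothesis. Summing the contributions over $\sqrt{n}<k<n$ then gives the time-integrated lower bound $\int_0^{T_{\sqrt n}}\lambda\,dt\gtrsim\log n$ directly, with no bootstrap on a continuous profile. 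Your ``technically lighter variant'' with a high-frequency bump on a background is in spirit closer to this, but still lacks the inductive mechanism that makes the estimates $\ell$-independent.
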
 


\begin{remark} The same result with Theorem \ref{thm:inf} can be proved in the periodic case using a similar approach.
\end{remark}

As previously mentioned, while achieving log-Lipschitz continuity of the velocity fields ensures the well-definedness of associated characteristics, it alone does not suffice to propagate the H\"older regularity of solutions, thereby failing to prevent solutions from exhibiting $\calC^{0,\alpha}$--norm inflation. For the proof of Theorem \ref{thm:inf}, motivated from \cite{JK24}, we first construct a family of odd-odd symmetric initial data $\{ \theta_0^{(N)} \}_{N \geq n_0} \subset \calC^{\infty}_c(\mathbb{R}^2)$ consisting of disjoint {\it bubbles}. These serve as the foundation for observing strong norm inflation in the corresponding solution. For precise details regarding the setup of the initial data, see Section \ref{sec_ini} below. This together with the uniqueness of solutions from Theorem \ref{thm_main} shows that the solution stemming from $\theta_0^{(N)}$ retains odd-odd symmetry as long as it exists. We then provide velocity estimates for odd-odd solutions, which play a crucial role in capturing the norm inflation of a family of solutions under the {\it hyperbolic flow scenario}. This particular velocity approximation is initially introduced in \cite[Lemma 3.1]{KS14} for the two-dimensional Euler equation and has since been utilized in various instances to establish the growth of vorticity \cite{BL14, E21, EJ17, GP21, HK21, JKpre, JK22, JKYpre, KRYZ16, K21, X16, Z15, Zpre}. Finally, we study short-time dynamics of solutions to complete the proof of Theorem \ref{thm:inf}. It is worth noting that all estimates in the proof of Theorem \ref{thm:inf} are provided with explicit dependence on $N$. As a direct consequence, letting $N \to \infty$ in the initial data $\{ \theta_0^{(N)} \}_{N \geq n_0}$ yields the following result.

\begin{corollary}[Nonexistence of odd-odd solutions in $\calC^{0,\alpha}(\R^2)$]\label{nonexist}
	For any $\varepsilon > 0$, there exists $\theta_{0} \in \calC_c^{0,\alpha}(\R^2)$ satisfying \begin{equation*}
		\begin{split}
			\| \theta_0 \|_{\calC^{0,\alpha}} < \varepsilon 
		\end{split}
	\end{equation*} such that there is no odd-odd symmetric solution to \eqref{main_eq} with initial data $\theta_{0}$ belonging to $L^\infty([0,\delta];\calC^{0,\alpha}(\R^{2}))$ with any $\delta>0$. 
\end{corollary}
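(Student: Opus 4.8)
The plan is to derive this as a direct limiting consequence of the quantitative norm-inflation mechanism used to prove Theorem \ref{thm:inf}. Suppose, for contradiction, that for some $\varepsilon_0 > 0$ there were no such $\theta_0$; equivalently, suppose that every $\theta_0 \in \calC_c^{0,\alpha}(\R^2)$ with $\|\theta_0\|_{\calC^{0,\alpha}} < \varepsilon_0$ admits an odd-odd symmetric solution in $L^\infty([0,\delta_0];\calC^{0,\alpha}(\R^2))$ for some $\delta_0 > 0$. I would instead construct a single, fixed initial datum $\theta_0 \in \calC_c^{0,\alpha}$ with $\|\theta_0\|_{\calC^{0,\alpha}} < \varepsilon_0$ that violates this, by gluing together infinitely many of the disjointly-supported bubbles $\{\theta_0^{(N)}\}_{N \geq n_0}$ from Section \ref{sec_ini}. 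Since those bubbles have mutually disjoint supports and can be placed arbitrarily far apart (using the scaling/translation freedom in the construction), one can select a rapidly increasing subsequence $N_k \to \infty$ and a sequence of centers $x_k$ so that the supports of the translated bubbles $\theta_0^{(N_k)}(\cdot - x_k)$ are pairwise disjoint, and define $\theta_0 := \sum_k \theta_0^{(N_k)}(\cdot - x_k)$. Choosing the amplitudes summably small (which is consistent with the freedom in the construction, since each bubble already satisfies $\|\theta_0^{(N)}\|_{\calC^{0,\alpha}} < \varepsilon$ for the $\varepsilon$ of our choosing) guarantees $\theta_0 \in \calC_c^{0,\alpha}$ — actually we may need $\theta_0$ merely in $\calC_c^{0,\alpha}$ rather than $\calC_c^\infty$, which is why the corollary is stated with $\calC_c^{0,\alpha}$ — with $\|\theta_0\|_{\calC^{0,\alpha}} < \varepsilon_0$.

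Next I would run the contradiction. By hypothesis there is an odd-odd solution $\theta \in L^\infty([0,\delta_0];\calC^{0,\alpha})$. By finite speed of propagation of the relevant information in the hyperbolic-flow regime — or more simply, by the locality of the leading-order velocity estimate for odd-odd data (the Key Lemma à la \cite[Lemma 3.1]{KS14}) restricted to a neighborhood of each bubble — the restriction of $\theta$ near the $k$-th bubble behaves, on a short time interval, like the solution emanating from $\theta_0^{(N_k)}$ alone, up to errors controlled by the separation of supports, which we have made as large as we like. The quantitative estimates in the proof of Theorem \ref{thm:inf}, which by construction carry explicit dependence on $N$, then give a time $t_k^* \in (0,\delta_0)$ and a lower bound of the form $\|\theta(t_k^*)\|_{\calC^{0,\alpha}} \gtrsim \|\theta_0^{(N_k)}\|_{\calC^{0,\alpha}} \cdot g(N_k)$ where $g(N_k) \to \infty$ as $k \to \infty$. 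Taking $k \to \infty$ forces $\sup_{t \in [0,\delta_0]}\|\theta(t)\|_{\calC^{0,\alpha}} = +\infty$, contradicting $\theta \in L^\infty([0,\delta_0];\calC^{0,\alpha})$. Hence no such global-in-a-neighborhood odd-odd solution can exist, which is exactly the assertion.

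The main obstacle, I expect, is making the \emph{decoupling} rigorous: I must show that the presence of the other (infinitely many) bubbles does not destroy the norm-inflation of a given bubble before its inflation time $t_k^*$. This requires two ingredients. First, a uniform-in-$k$ lower bound on how long the $k$-th bubble stays essentially decoupled, which follows from choosing the inter-bubble distances to grow fast enough relative to the (bounded, since we control $\|\theta_0\|_{L^\infty}$ and use the a priori transport bound $\|\theta(t)\|_{L^\infty} = \|\theta_0\|_{L^\infty}$) velocity field generated by the far-away bubbles — note the Biot–Savart kernel $K_\alpha$ decays like $|x|^{-1-\alpha}$, so contributions from distant mass are genuinely small and can be absorbed into the error terms already present in the hyperbolic-flow analysis of Theorem \ref{thm:inf}. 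Second, one must check that the odd-odd symmetry is compatible with this infinite gluing; this forces the centers $x_k$ and the bubbles to be arranged symmetrically (each bubble, or a symmetric family of its four reflected copies, placed so that the whole configuration is odd in both variables), which is harmless since the single-bubble construction of Section \ref{sec_ini} is already odd-odd and supported away from the axes. A secondary, more bookkeeping-level point is that $\theta_0$ as constructed has compact support only if the $N_k$ are chosen so that the bubble sizes shrink (or at least stay bounded) while the centers march off to infinity — one then checks the total support is bounded, or alternatively relaxes to $\theta_0 \in \calC^{0,\alpha}$ decaying at infinity and notes the argument only uses the local behavior near finitely many bubbles at a time. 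Modulo these decoupling estimates, the corollary is immediate from the $N$-explicit bounds in Theorem \ref{thm:inf}.
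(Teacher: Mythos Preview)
Your approach diverges substantially from the paper's and introduces an unnecessary --- and ultimately problematic --- translation/gluing step. The paper simply takes $\theta_0 := \theta_0^{(\infty)} = \sum_{n \geq n_0} n^{-\beta} \theta_{0,loc}^{(n)}$, i.e., the same nested family of bubbles as in \eqref{eq:nonexist-data} with the upper index $N$ sent to $\infty$. This $\theta_0$ is automatically compactly supported (the bubbles accumulate at the origin) and lies in $\calC_c^{0,\alpha}$ (indeed in $c^{0,\alpha}$). Assuming an odd-odd solution $\theta \in L^\infty([0,\delta];\calC^{0,\alpha})$ exists with bound $M$, every estimate in Section~\ref{sec_pf} applies verbatim, since those estimates use only the odd-odd symmetry, the bound $\|\theta(t)\|_{\calC^{0,\alpha}} \leq M$ (via Lemma~\ref{log_lip_gen} and Lemma~\ref{key_lem}), and the geometric arrangement of the bubbles near the origin. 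The final inequality $\|\theta(T_{\sqrt n},\cdot)\|_{\calC^{0,\alpha}} \gtrsim n^\eta$ then holds for every $n$, and taking $n$ large gives the contradiction. No decoupling beyond the scale-ordering estimate \eqref{order} already present in the proof is needed.

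Your translation idea has a genuine gap. The norm-inflation mechanism is driven by the hyperbolic structure of the velocity \emph{at the origin}, which is produced by the odd-odd symmetry; Lemma~\ref{key_lem} is stated only for $|x| < \tfrac14$ and the integral over $Q(x)$ is what makes the larger-scale bubbles squeeze the smaller ones. If you translate $\theta_0^{(N_k)}$ to a distant center $x_k$ and then add its three reflections to restore global odd-odd symmetry about the origin, the bubble now sits far from the origin, where the key velocity approximation no longer applies; near $x_k$ there is no analogous hyperbolic stagnation point. The ``locality'' you invoke would require the solution near $x_k$ to mimic the untranslated $\theta_0^{(N_k)}$-evolution near the origin, but that evolution relies essentially on the origin being a fixed point of the flow with the specific saddle structure --- a property that does not survive translation. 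There is also a smaller confusion: the $\theta_0^{(N)}$'s are not disjointly supported from one another; $\theta_0^{(N)}$ is a \emph{sum} of the local bubbles $\theta_{0,loc}^{(n)}$ for $n_0 \leq n \leq N$, so $\theta_0^{(N_1)}$ and $\theta_0^{(N_2)}$ share all bubbles up to $\min(N_1,N_2)$.
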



\begin{remark} The initial data $\theta_0$ in Corollary~\ref{nonexist} is given by the aforementioned $\{ \theta_0^{(N)} \}_{N \geq n_0}$ with $N = \infty$. It can be proved that $\theta_0$ is in the little H\"older space $c^{0,\alpha}(\R^2)$.
\end{remark}

\subsubsection*{Notations} All generic positive constants are denoted by $C$. $f \ls g$ represents that there exists a positive constant $C>0$ such that $f \leq C g$. $A \simeq B$ stands for $A \ls B$ and $B \ls A$. For simplicity, we denote $L^p(\R^2)$, and $\calC^{0,\beta}(\R^2)$ as $L^p$, and $\calC^{0,\beta}$, respectively.   

\subsubsection*{Outline of the paper} The rest of this paper is organized as follows. In Section \ref{sec_pre}, we present a crucial estimate on the log-Lipschitz continuity of the velocity field $u$. Sections \ref{sec_well} and \ref{sec_ill} are devoted to providing the details of proofs of Theorems \ref{thm_main} and \ref{thm:inf}, respectively.

%
%
%
%
%
\section{Lipschitz estimates}\label{sec_pre}
\setcounter{equation}{0}
In this section, we provide Lipschitz estimates of the velocity field. Specifically, we show that $u$ is log-Lipschitz continuous when $\theta \in \calC^{0,\alpha}$ and Lipschitz continuous when $\theta \in \calC^{0,\beta}$ for any $\beta \in (\alpha,1]$.
Note that $K_\alpha$ satisfies 
\bq\label{K_alp}
|K_\alpha(x)| \le \frac{C}{|x|^{1+\alpha}} \quad \mbox{and} \quad |\nabla K_\alpha(x)| \le \frac{C}{|x|^{2+\alpha}}, \quad \forall \,x \in \R^2\setminus\{0\}.
\eq
\begin{lemma}\label{log_lip_gen}
Let $h \in \calC^{0,\alpha}(\R^2)$ and $K_\alpha$ satisfy \eqref{K_alp}. Then $K_\alpha \star h$ is log-Lipschitz continuous. Moreover, we have
\[
|(K_\alpha \star h) (x) - (K_\alpha \star h) (\tilde{x})| \le C\|h\|_{\calC^{0,\alpha}}|x-\tilde{x}|(1-\log^-|x-\tilde{x}|),
\]
where $C > 0$ is independent of $x$ and $\tilde{x}$.
\end{lemma}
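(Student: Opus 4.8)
The plan is to bound the difference $(K_\alpha \star h)(x) - (K_\alpha \star h)(\tilde{x})$ by splitting the convolution integral into a near region and a far region, with the splitting radius dictated by $r := |x - \tilde{x}|$. Set $x_m := \tfrac12(x + \tilde{x})$ and decompose $\R^2 = B(x_m, 2r) \cup (\R^2 \setminus B(x_m, 2r))$. Because the logarithmic loss occurs only in the singular near-field, the bulk of the work is the far-field estimate, which should give the clean Lipschitz factor $r$ without a log, while the near-field gives a term of size $\ls \|h\|_{L^\infty} r$ (or $\ls \|h\|_{\calC^{0,\alpha}} r$), also without a log. The log arises only from the intermediate annular range of scales in the far-field sum.

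First I would treat the far field. Write
\[
(K_\alpha \star h)(x) - (K_\alpha \star h)(\tilde x) = \int_{\R^2} \big( K_\alpha(x - y) - K_\alpha(\tilde x - y) \big) h(y)\, dy,
\]
and on $\{|y - x_m| \ge 2r\}$ use the mean value theorem together with the gradient bound in \eqref{K_alp}: for such $y$ the segment from $\tilde x - y$ to $x - y$ stays at distance $\gs |y - x_m|$ from the origin, so $|K_\alpha(x-y) - K_\alpha(\tilde x - y)| \le C r / |y - x_m|^{2+\alpha}$. Here is where I would exploit a \emph{cancellation structure}: rather than pairing with $h(y)$ directly — which would force me to use $\|h\|_{L^\infty}$ and produce a divergent integral $\int_{2r}^\infty \rho^{-1-\alpha}\,d\rho$ at $\alpha = 0$, hence the log — I subtract a constant, replacing $h(y)$ by $h(y) - h(x_m)$, using that $\int (K_\alpha(x-y) - K_\alpha(\tilde x-y))\,dy$ is (formally) zero or at least controllable. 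Then $|h(y) - h(x_m)| \le \|h\|_{\calC^{0,\alpha}} |y - x_m|^\alpha$, and the far integral is dominated by
\[
C \|h\|_{\calC^{0,\alpha}}\, r \int_{2r}^{R} \frac{\rho^\alpha}{\rho^{2+\alpha}} \rho\, d\rho + (\text{tail}),
\]
which is $\ls \|h\|_{\calC^{0,\alpha}}\, r\, \log(R/r)$ over the annulus $2r \le |y - x_m| \le R$; choosing $R$ a fixed $O(1)$ scale and handling $|y - x_m| \ge R$ separately (there $h$ is merely integrated against an $L^1$ kernel tail, or one uses that $h \in L^\infty$ with the now-convergent kernel) produces exactly the factor $1 - \log^- r$. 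In the near field $\{|y - x_m| < 2r\}$ I estimate the two terms separately: $|\int_{|y-x_m|<2r} K_\alpha(x-y) h(y)\,dy| \le \|h\|_{L^\infty}\int_{|z|<3r}|z|^{-1-\alpha}\,dz \ls \|h\|_{L^\infty} r^{1-\alpha}$, and similarly for the $\tilde x$ term. Since $r^{1-\alpha} \le r \cdot r^{-\alpha}$ is \emph{not} bounded by $r$ for small $r$, I must instead again subtract $h(x_m)$ and use the Hölder bound: $\int_{|z| < 3r} |z|^{-1-\alpha}\, |z|^\alpha\, dz \ls r$, which gives the desired $\ls \|h\|_{\calC^{0,\alpha}} r$ with no log. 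Combining the two regions yields the stated inequality.

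The main obstacle is making the cancellation rigorous: the identity $\int_{\R^2}(K_\alpha(x-y) - K_\alpha(\tilde x - y))\,dy = 0$ is only formal since $K_\alpha \notin L^1$, so subtracting $h(x_m)$ is not literally free. I would resolve this by doing the subtraction region by region — i.e. subtract $h(x_m)$ only inside a ball $B(x_m, 1)$ (where both the near-field and the critical part of the far-field live) and bound the contribution of $h(x_m)$ times the kernel difference over that ball directly (again by the mean value theorem and \eqref{K_alp}, this integral is absolutely convergent and $O(r)$, possibly with a harmless log), while outside $B(x_m,1)$ one keeps $h(y)$ and uses $\|h\|_{L^\infty}$ against the integrable tail $|y-x_m|^{-2-\alpha}$. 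A secondary technical point is the boundary annulus where $|z| \approx r$: there the mean value theorem is not available for the near part, but the crude $|K_\alpha(x-y)| + |K_\alpha(\tilde x - y)| \ls r^{-1-\alpha}$ bound against $|h(y) - h(x_m)| \ls r^\alpha$ over a region of area $\ls r^2$ suffices. Everything else is routine kernel bookkeeping.
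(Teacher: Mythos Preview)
Your strategy (center at the midpoint $x_m$, subtract $h(x_m)$, split into near/intermediate/far) is sound, and the near-field and intermediate-annulus estimates you sketch are correct. The gap is in the leftover constant term
\[
h(x_m)\int_{B(x_m,1)}\bigl(K_\alpha(x-y)-K_\alpha(\tilde x-y)\bigr)\,dy,
\]
which you propose to control ``by the mean value theorem and \eqref{K_alp}.'' Applying the mean value theorem to the \emph{integrand} gives only $|K_\alpha(x-y)-K_\alpha(\tilde x-y)|\ls r\,|y-x_m|^{-2-\alpha}$ on $\{2r<|y-x_m|<1\}$, and integrating yields $r\int_{2r}^{1}\rho^{-1-\alpha}\,d\rho\sim r^{1-\alpha}$; the inner ball contributes another $O(r^{1-\alpha})$. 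So this route produces only $\|h\|_{L^\infty}\,r^{1-\alpha}$, which is \emph{not} dominated by $r(1-\log^-r)$ as $r\to0$ --- neither $O(r)$ nor ``$O(r)$ with a harmless log'' follows. The easy fix is a change of variables rather than the mean value theorem: setting $a=(x-\tilde x)/2$, the integral equals $\int_{B(a,1)}K_\alpha-\int_{B(-a,1)}K_\alpha$, i.e.\ the integral of $K_\alpha$ over the symmetric difference $B(a,1)\triangle B(-a,1)$. A short computation shows this set is contained in the annulus $\{1-r/2<|z|<1+r/2\}$, which has area $O(r)$ and on which $|K_\alpha|\le C$ by \eqref{K_alp}; hence the term is $O(\|h\|_{L^\infty}\,r)$ as required.

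The paper avoids this term altogether by a different device: instead of centering at the midpoint, it writes the difference as the \emph{average} of two decompositions, one using the regions $\{|x-y|\le 2r\}$, $\{2r<|x-y|<2\}$, $\{|x-y|\ge2\}$ and the other the same regions with $\tilde x$ in place of $x$. After the mean value theorem in each intermediate annulus and peeling off the constant $h(\tilde x)$, the two resulting constant pieces cancel one another \emph{exactly} by translation invariance (each reduces to the same pair of integrals $\int_{\{2r<|y|<2\}}K_\alpha(y)\,dy$ and $\int_{\{2r<|y|<2\}}K_\alpha((x-\tilde x)-y)\,dy$, appearing with opposite signs). So no analogue of your leftover term ever has to be bounded. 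Your midpoint approach is arguably more direct once the symmetric-difference observation above is supplied; the paper's symmetric averaging is slightly longer but sidesteps the need for it.
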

\begin{proof}
We split the proof into two cases  $|x-\tilde{x}|\ge 1$ and  $|x-\tilde{x}| \le 1$. 

\medskip

\paragraph{\bf (Case A: $|x-\tilde{x}|\ge 1$)} In this case, we estimate
\[\begin{aligned}
(K_\alpha \star h)(x) - (K_\alpha \star h)(\tilde{x}) &=\lt( \int_{ \{ |x-y|\le 2|x-\tilde{x}| \}} + \int_{\{2|x-\tilde{x}|<|x-y|\}} \rt)(K_\alpha(x-y) - K_\alpha(\tilde{x}-y))h(y)\,dy \\
&=: I +II,
\end{aligned}\]
where
\[\begin{aligned}
I&\le \|h\|_{L^\infty}\int_{\{|x-y|\le 2|x-\tilde{x}|\}} \lt(|K_\alpha(x-y)|+ |K_\alpha(\tilde{x}-y)|\rt)\,dy \\
&\ls \|h\|_{L^\infty} \lt(\int_0^{2|x-\tilde{x}|} \frac1{s^\alpha}\,ds + \int_0^{3|x-\tilde{x}|} \frac1{s^\alpha}\,ds \rt)\\
&\ls \|h\|_{L^\infty} |x-\tilde{x}|^{1-\alpha}\cr
& \ls \|h\|_{L^\infty}|x-\tilde{x}|.
\end{aligned}\]
Here we used $|x-\tilde{x}|\ge 1$ and $\{|(x-\tilde{x})-y|\le 2|x-\tilde{x}|\} \subseteq \{|y|\le 3|x-\tilde{x}|\}$.

For $II$, note that $2|x-\tilde{x}| < |x-y|$ implies 
\[
|\tilde{x}-y| = |(\tilde{x}-x) + x-y| \ge \frac{|x-y|}{2},
\] 
and thus $K_\alpha$ is $C^1$ in the region $\{2|x-\tilde{x}|<|x-y|\}$. This deduces
\[\begin{aligned}
II &= \int_{\{2|x-\tilde{x}|<|x-y|\}} \int_0^1 \nabla K_\alpha ((\tilde{x}-y) + t(x-\tilde{x})) \cdot (x- \tilde{x}) h(y)\,dtdy\\
&\ls \|h\|_{L^\infty} |x-\tilde{x}| \int_{\{2|x- \tilde{x}|<|x-y|\}} \frac{1}{|x-y|^{2+\alpha}}\,dy\\
&\ls \|h\|_{L^\infty} |x-\tilde{x}| \int_{2|x-\tilde{x}|}^\infty  \frac1{s^{\alpha+1}}\,ds\\
&\ls \|h\|_{L^\infty} |x-\tilde{x}|  \int_{2}^\infty  \frac1{s^{\alpha+1}}\,ds\cr
&\simeq \|h\|_{L^\infty} |x-\tilde{x}|.
\end{aligned}\]

\medskip

\paragraph{\bf (Case B: $|x-\tilde{x}|<1$)} For simplicity, we write $r := |x-\tilde{x}|$. Then, we estimate the difference as follows:
\[\begin{aligned}
(&K_\alpha \star h)(x) - (K_\alpha \star h)(\tilde{x})\\
&= \frac12\lt(\int_{\{|x-y|\ge 2\}} + \int_{\{ 2r<|x-y|<2 \}} + \int_{\{ |x-y|\le 2r\}} \rt)\lt( K_\alpha(x-y) -K_\alpha(\tilde{x}-y)\rt)h(y)\,dy\\
&\quad + \frac12 \lt(\int_{\{|\tilde{x}-y|\ge 2\}} + \int_{\{ 2r<|\tilde{x}-y|<2 \}} + \int_{\{ |\tilde{x}-y|\le 2r \}} \rt)\lt( K_\alpha(x-y) - K_\alpha(\tilde{x}-y)\rt)h(y)\,dy\\
&=: III_1 + III_2 + III_3 + IV_1 + IV_2 + IV_3. 
\end{aligned}\]
For $III_1$ and $IV_1$, we use the fact $|x - \tilde{x}|\le 1$ to observe
\[
|x-y|, \ |\tilde{x}-y|\ge 1 \quad \mbox{if }  |x-y|\ge 2 \mbox{ or } |\tilde{x}-y|\ge 2. 
\]
Thus, $K_\alpha$ is $\calC^1$ on regions $\{|x-y|\ge 2\}$ and $\{|\tilde{x}-y|\ge 2\}$. Hence, we obtain
\[\begin{aligned}
III_1 &= \int_{\{|x-y|>2\}} \int_0^1 \nabla K_\alpha((\tilde{x}-y) + t(x-\tilde{x})) \cdot(x -\tilde{x}) h(y)\,dtdy\\
&\ls |x-\tilde{x}| \int_{\{|x-y|>2\}} \frac{1}{|(\tilde{x}-y)+t(x-\tilde{x})|^{2+\alpha}}|h(y)|\,dy\\
&\ls \|h\|_{L^\infty}|x-\tilde{x}|,
\end{aligned}\]
and we also have the same estimates for $IV_1$.

For $III_2$, $K_\alpha$ is also in $\calC^1$ in the region $\{2r < |x-y|<2\}$. Thus, we use the mean-value theorem to yield
\begin{align*}
III_2 &=  \int_{\{ 2r<|x-y|<2 \}} \int_0^1 \nabla K_\alpha((\tilde{x}-y) + t(x - \tilde{x}))\cdot (x-\tilde{x}) h(y)\,dtdy\\
&=   \int_{\{ 2r<|x-y|<2 \}} \int_0^1 \nabla K_\alpha((\tilde{x}-y) + t(x - \tilde{x}))\cdot (x-\tilde{x}) (h(y)-h(\tilde{x} + t(x-\tilde{x}))\,dtdy\\
&\quad + \int_{\{ 2r<|x-y|<2 \}} \int_0^1\nabla K_\alpha((\tilde{x}-y) + t(x - \tilde{x})) \cdot (x-\tilde{x}) (h(\tilde{x} + t(x-\tilde{x})) - h(\tilde{x}))\,dtdy\\
&\quad  + \int_{\{ 2r<|x-y|<2 \}} \int_0^1 \nabla K_\alpha((\tilde{x}-y) + t(x - \tilde{x})) \cdot (x-\tilde{x}) h(\tilde{x})\,dtdy\\
&\le C[h]_{\calC^{0,\alpha}}\int_{\{ 2r<|x-y|<2 \}} \frac{|x-\tilde{x}|}{|x-y|^2}h(y)\,dy + C[h]_{\calC^{0,\alpha}}\int_{\{ 2r<|x-y|<2 \}} \frac{|x-\tilde{x}|^{1+\alpha}}{|x-y|^{2+\alpha}}h(y)\,dy\\
&\quad + \int_{\{ 2r<|x-y|<2 \}} \lt( K_\alpha(x-y) - K_\alpha(\tilde{x}-y)\rt) h(\tilde{x})\,dy\\
&\le C[h]_{\calC^{0,\alpha}} |x-\tilde{x}|   \lt(  \int_{2r}^2 \frac 1s\,ds  + |x-\tilde{x}|^{\alpha} \int_{2r}^2 \frac{1}{s^{1+\alpha}}\,ds\rt)\\
&\quad + \int_{\{ 2r<|x-y|<2 \}} \lt(K_\alpha(x-y)-K_\alpha(\tilde{x}-y)\rt) h(\tilde{x})\,dy\\
&\le -C[h]_{\calC^{0,\alpha}} |x- \tilde{x}|\log|x-\tilde{x}|  + C[h]_{\calC^{0,\alpha}} |x- \tilde{x}|  \cr
&\quad + h(\tilde{x}) \int_{\{ 2r<|x-y|<2 \}} \lt(K_\alpha(x-y) - K_\alpha(\tilde{x}-y)\rt) dy,
\end{align*}
where we used 
\[
|(\tilde{x}-y) + t(x - \tilde{x})| = |(x-y) -(1-t)(x-\tilde{x})| \ge | |x-y| - |(1-t)|x-\tilde{x}|| \ge \frac{|x-y|}{2}. 
\]
The same arguments apply to $IV_2$, and we arrive at
\[
IV_2 \le -C[h]_{\calC^{0,\alpha}}|x- \tilde{x}|\log|x-\tilde{x}|+ C[h]_{\calC^{0,\alpha}} |x- \tilde{x}|    + h(\tilde{x})\int_{\{ 2r<|\tilde{x}-y|<2 \}} \lt( K_\alpha(x-y)-K_\alpha(\tilde{x}-y)\rt) dy.
\]
Here, we notice that
\[
\int_{\{2r < |x - y|<2  \}} K_\alpha(x-y)\,dy = \int_{\{2r < |\tilde{x} - y|<2  \}} K_\alpha(\tilde{x}-y)\,dy =  \int_{\{2r < |y|<2  \}}K_\alpha(y)\,dy,
\]
and
\[
\int_{\{2r < |\tilde{x} - y|<2  \}} K_\alpha(x-y)\,dy = \int_{\{2r < |x - y|<2  \}} K_\alpha(\tilde{x}-y)\,dy =  \int_{\{2r < |y|<2  \}} K_\alpha((x-\tilde{x})-y)\,dy.
\]
This shows that the terms with $h(\tilde{x})$ on the right-hand sides of the last inequalities in $III_2$ and $IV_2$ cancel out each other. Thus we have
\[
III_2 + IV_2 \le -C[h]_{\calC^{0,\alpha}}|x- \tilde{x}|\log|x-\tilde{x}| + C[h]_{\calC^{0,\alpha}} |x- \tilde{x}| \leq C[h]_{\calC^{0,\alpha}}|x-\tilde{x}|(1-\log^-|x-\tilde{x}|).
\]
\noindent For $III_3$, we use $\{ |x -y| < 2r\} \subseteq \{ |\tilde{x} -y| < 3r\}$ to get
\[\begin{aligned}
III_3 &= \int_{\{|x -y|< 2r \}} K_\alpha(x-y)(h(y) - h(x))\,dy + h(x) \int_{\{|x -y|< 2r \}} K_\alpha(x-y)\,dy\\
&\quad - \int_{\{|x -y|< 2r \}} K_\alpha(\tilde{x}-y)(h(y) - h(\tilde{x}))\,dy - h(\tilde{x})\int_{\{|x -y|< 2r \}} K_\alpha(\tilde{x}-y)\,dy\\
&\le C[h]_{\calC^{0,\alpha}}\lt(\int_0^{2r} \,ds + \int_0^{3r}\,ds\rt) + h(x) \int_{\{|x -y|< 2r \}} K_\alpha(x-y)\,dy  - h(\tilde{x})\int_{\{|x -y|< 2r \}} K_\alpha(\tilde{x}-y)\,dy\\
&\le C[h]_{\calC^{0,\alpha}} |x - \tilde{x}| + h(x) \int_{\{|y|< 2r \}}K_\alpha(y)\,dy - h(\tilde{x})\int_{\{|y|< 2r \}} K_\alpha((x-\tilde{x})-y)\,dy.
\end{aligned}\]
Similarly, we find
\[\begin{aligned}
IV_3 &\le C[h]_{\calC^{0,\alpha}} |x - \tilde{x}| + h(x) \int_{\{|y|< 2r \}}K_\alpha((x-\tilde{x})-y)\,dy- h(\tilde{x})\int_{\{|y|< 2r \}} K_\alpha(y)\,dy.
\end{aligned}\]
Thus, once we use $\{|(x - \tilde{x})-y| \le 2r \} \subseteq  \{ |y| \le 3r\}$ to deduce
\[\begin{aligned}
III_3 + IV_3 &\ls [h]_{\calC^{0,\alpha}}|x - \tilde{x}| +  |h(x) -h(\tilde{x})|\int_{\{ |y| \le 2r\}} \frac{1}{|y|^{1+\alpha}}\,dy + |h(x) - h(\tilde{x})| \int_{\{ |y|\le 3r\}}\frac{1}{|y|^{1+\alpha}}\,dy\\
&\ls [h]_{\calC^{0,\alpha}}|x - \tilde{x}|  +  [h]_{\calC^{0,\alpha}}|x - \tilde{x}|^{\alpha} \lt( \int_0^{2r} \frac1{s^\alpha}\,ds + \int_0^{3r}\frac1{s^\alpha}\,ds\rt)\\
&\ls [h]_{\calC^{0,\alpha}}|x - \tilde{x}|.
\end{aligned}\]
Therefore, we combine all the above estimates to yield the desired result.
\end{proof}

Following almost the same argument as in the proof of Lemma \ref{log_lip_gen}, we also have the following result.
\begin{corollary}\label{lip_gen}
Let $h \in \calC^{0,\beta}(\R^2)$ with $\alpha<\beta \le 1$ and $K_\alpha$ satisfy \eqref{K_alp}. Then $K_\alpha \star h$ is Lipschitz continuous. Moreover, we have
\[
|(K_\alpha \star h) (x) - (K_\alpha \star h) (\tilde{x})| \le C\|h\|_{\calC^{0,\beta}}|x-\tilde{x}|,
\]
where $C > 0$ is independent of $x$ and $\tilde{x}$.
\end{corollary}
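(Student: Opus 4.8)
The plan is to repeat the proof of Lemma \ref{log_lip_gen} almost verbatim, with the seminorm $[h]_{\calC^{0,\beta}}$ in place of $[h]_{\calC^{0,\alpha}}$, and to isolate the single place where the extra regularity $\beta-\alpha>0$ turns the logarithmic loss into a clean linear bound. As before, write $r:=|x-\tilde x|$ and split into the regimes $r\ge 1$ and $r<1$. In the regime $r\ge 1$ the argument is identical to Case A of Lemma \ref{log_lip_gen}: cutting $K_\alpha\star h$ at scale $2r$ and using only $\|h\|_{L^\infty}\le\|h\|_{\calC^{0,\beta}}$ together with \eqref{K_alp} gives a bound $\ls\|h\|_{L^\infty}\,r^{1-\alpha}\ls\|h\|_{L^\infty}\,r$ because $r\ge 1$; no H\"older seminorm enters here.

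For $r<1$ I would use the same dyadic-plus-symmetrized decomposition into $III_1,III_2,III_3,IV_1,IV_2,IV_3$. The far pieces $III_1,IV_1$ (over $\{|x-y|\ge 2\}$, resp. $\{|\tilde x-y|\ge 2\}$) and the annular pieces $III_2,IV_2$ (over $\{2r<|x-y|<2\}$, resp. $\{2r<|\tilde x-y|<2\}$) are treated with the mean value theorem, using that $K_\alpha\in\calC^1$ off the origin and that $|(\tilde x-y)+t(x-\tilde x)|\gs|x-y|$ on these regions; the near pieces $III_3,IV_3$ (over $\{|x-y|\le 2r\}$, resp. $\{|\tilde x-y|\le 2r\}$) are treated by subtracting $h(x)$ or $h(\tilde x)$ and using \eqref{K_alp}. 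The indispensable ingredient, exactly as in Lemma \ref{log_lip_gen}, is the cancellation of the residual mean-value terms: the pieces carrying $h(\tilde x)$ in $III_2,IV_2$ and those carrying $h(x),h(\tilde x)$ in $III_3,IV_3$ combine across the two symmetrized halves thanks to identities such as $\int_{\{2r<|x-y|<2\}}K_\alpha(x-y)\,dy=\int_{\{2r<|\tilde x-y|<2\}}K_\alpha(\tilde x-y)\,dy$ and their cross-variants, leaving only factors $|h(x)-h(\tilde x)|$ times bounded quantities. This cancellation cannot be avoided, since $K_\alpha$ fails to be locally integrable when $\alpha\ge 1$.

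The one genuine change is quantitative. In controlling $III_2$ (and $IV_2$) one encounters the term $\ls [h]_{\calC^{0,\beta}}\,r\int_{2r}^{2}s^{-2-\alpha}\cdot s^{\beta}\cdot s\,ds=[h]_{\calC^{0,\beta}}\,r\int_{2r}^{2}s^{\beta-\alpha-1}\,ds$, where $s^{-2-\alpha}$ comes from $|\nabla K_\alpha|$ on the annulus and $s^{\beta}$ from $|h(y)-h(\tilde x+t(x-\tilde x))|\ls[h]_{\calC^{0,\beta}}|x-y|^{\beta}$. In Lemma \ref{log_lip_gen} the exponent was $s^{-1}$, producing $\log(1/r)$; here, because $\beta-\alpha>0$ \emph{strictly}, $\int_{2r}^{2}s^{\beta-\alpha-1}\,ds\le(\beta-\alpha)^{-1}2^{\beta-\alpha}$ uniformly in $r\in(0,1)$, so this contribution is $\ls[h]_{\calC^{0,\beta}}\,r$. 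The near pieces $III_3,IV_3$ now generate powers $r^{1+\beta-\alpha}$ --- from $\int_0^{cr}s^{\beta-\alpha}\,ds$ and from $|h(x)-h(\tilde x)|\ls[h]_{\calC^{0,\beta}}r^{\beta}$ against $\int_{\{|y|\le cr\}}|y|^{-1-\alpha}\,dy\ls r^{1-\alpha}$ --- and since $\beta>\alpha$ and $r<1$ we have $r^{1+\beta-\alpha}\le r$. Summing all pieces yields $|(K_\alpha\star h)(x)-(K_\alpha\star h)(\tilde x)|\ls\|h\|_{\calC^{0,\beta}}\,r$, as claimed.

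I do not expect a serious obstacle: the scheme of Lemma \ref{log_lip_gen} carries over directly, and the only subtlety is that the strict gap $\beta>\alpha$ is used precisely to make the borderline annular integral $\int_{2r}^{2}s^{\beta-\alpha-1}\,ds$ bounded as $r\to 0$ (the implied constant $C$ being allowed to depend on $\beta-\alpha$ and blowing up as $\beta\downarrow\alpha$, consistently with the logarithmic estimate of Lemma \ref{log_lip_gen}), together with the fact $r<1$ to absorb all sub-diagonal powers into $r$.
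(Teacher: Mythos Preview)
Your proposal is correct and coincides with the paper's approach, which simply refers back to Lemma~\ref{log_lip_gen}; you have correctly isolated the single quantitative change, namely that the annular integral $\int_{2r}^{2}s^{-1}\,ds$ in the $III_2$/$IV_2$ estimate becomes $\int_{2r}^{2}s^{\beta-\alpha-1}\,ds$, bounded uniformly in $r$ because $\beta>\alpha$. One small side remark: the cancellation in $III_2+IV_2$ is needed even for $\alpha\in(0,1)$---the obstruction is not local integrability of $K_\alpha$ but of $\nabla K_\alpha$, since without the cancellation the residual $h(\tilde x)$-term would only yield $O(r^{1-\alpha})$.
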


%
%
%
%
%
%
\section{Well-posedness in $\calC^{0,\beta}(\R^2)$ with $\beta \in (\alpha,1)$}\label{sec_well}
In this section, we study the local well-posedness of solutions to the system in $\calC^{0,\beta}(\R^2)$ with $\beta \in (\alpha,1)$ giving the details of proof of Theorem \ref{thm_main}. The proof consists of three parts: existence, uniqueness, and stability.

%
%
%
%
%
%
\subsection{Existence}\label{ext_holder}
In order to show the existence of H\"older continuous solutions to the system \eqref{main_eq}, we regularize the velocity field $u$ as
\[
u^\e(t,x) = (K_\alpha^\e \star \theta)(t,x) \quad \mbox{with } K_\alpha^\e(x) := \frac{x^\perp}{(|x|^2 + \e)^{\frac{2+\alpha}2}}
\]
and consider the following regularized equation:
\bq\label{reg_eq}
\pa_t \theta^\e + u^\e \cdot \nabla \theta^\e = 0
\eq
subject to the regularized initial data:
\[
\theta^\e_0 := \theta_0 \star \phi_\e,
\]
where $\phi_\e$ is the standard mollifier. For the regularized equation \eqref{reg_eq} with $\e > 0$, we obtain the global-in-time existence and uniqueness of regular solutions by employing the classical well-posedness theory, see \cite{Dob79, Lau07} for instance.
%
%
%
%
%
%
\subsubsection{Uniform-in-$\e$ bound estimates} 
We first show the uniform-in-$\e$ bound estimate of solutions $\theta^\e$ in the H\"older norm $\calC^{0,\beta}(\R^2)$. For this, we consider the backward characteristics $\Phi(s;t,\cdot) : \R^2 \to \R^2$ defined as a solution to the following:
 \[
 \frac {d}{ds} \Phi^\e(s;t,x) = u^\e(s,\Phi^\e(s;t,x)), \qquad \Phi^\e(t;t,x) = x
 \] 
 for $s \in (0,t)$. Due to the regularization, the above characteristic is well-defined. Then, we readily observe
 \[
 \theta^\e(t,x) = \theta^\e_0(\Phi^\e(0;t,x)),
 \]
 and thus
 \[
 \|\theta^\e(t)\|_{L^\infty} = \|\theta^\e_0\|_{L^\infty} \leq \|\theta_0\|_{L^\infty}.
 \]
 We also find from Corollary \ref{lip_gen} that for any $x,y \in \R^2$
\begin{align*}
|\Phi^\e(s;t,x) - \Phi^\e(s;t,y)| &\leq |x-y| + \int_s^t |(K_\alpha^\e \star \theta^\e)(\tau, \Phi^\e(\tau;t,x)) - (K_\alpha^\e \star \theta^\e)(\tau, \Phi^\e(\tau;t,y))|\,d\tau\cr
&\leq |x-y| + C\|\theta^\e\|_{\calC^{0,\beta}} \int_s^t |\Phi^\e(\tau;t,x) - \Phi^\e(\tau;t,y)|\,d\tau.
\end{align*} 
Thus, we deduce
\[
|\Phi^\e(0;t,x) - \Phi^\e(0;t,y)| \leq |x-y| \exp\lt(C \|\theta^\e\|_{L^\infty(0,t;\calC^{0,\beta})} t  \rt)
\]
for some $C > 0$ independent of $\e$ and $t$. Together with this, we estimate
\begin{align*}
|\theta^\e(t,x) -  \theta^\e(t,y)| &= |\theta^\e_0(\Phi^\e(0;t,x)) - \theta^\e_0(\Phi^\e(0;t,y))| \cr
&\leq \|\theta^\e_0\|_{\calC^{0,\beta}} |\Phi^\e(0;t,x) - \Phi^\e(0;t,y)|^\beta\cr
&\leq \|\theta_0\|_{\calC^{0,\beta}} \exp\lt(C\beta \|\theta^\e\|_{L^\infty(0,t;\calC^{0,\beta})} t  \rt)|x-y|^\beta
\end{align*} 
and subsequently,
\[
\|\theta^\e(t)\|_{\calC^{0,\beta}} \leq  \|\theta_0\|_{\calC^{0,\beta}}\exp\lt(C\beta \|\theta^\e\|_{L^\infty(0,t;\calC^{0,\beta})} t  \rt).
\]
We then use the standard continuity argument to conclude that there exists $T > 0$ such that
\[
\sup_{0 \leq t \leq T}\|\theta^\e(t)\|_{\calC^{0,\beta}} \leq 4\|\theta_0\|_{\calC^{0,\beta}}.
\]

Moreover, it follows from Corollary \ref{lip_gen} that $u^\e$ is uniformly bounded in $L^\infty(0,T; W^{1,\infty})$.

\subsubsection{Passing to the limit $\e \to 0$} Since $ \calC^{0,\beta} \hookrightarrow  \hookrightarrow \calC^0$, there is a limit function $\theta \in L^\infty(0,T; L^1 \cap \calC^{0,\beta})$ such that 
\[
\theta^\e \to \theta \quad \mbox{in } L^\infty(0,T; L^p(\R^2)) \quad \mbox{and} \quad u^\e \to u = K_\alpha \star \theta \quad \mbox{in } L^\infty(0,T; L^p(\R^2))
\]
for any $p \in (1,\infty]$. We now show that the limit functions $\theta$ and $u$ satisfy the equation \eqref{main_eq} in the sense of distributions. For any $\varphi \in \calC^\infty_c([0,T] \times \R^2)$, we estimate
\begin{align*}
\int_0^t \intr \nabla \varphi \cdot (\theta^\e u^\e - \theta u)\,dxds &= \int_0^t \intr \nabla \varphi \cdot (\theta^\e u^\e - \theta u)\,dxds\cr
&=\int_0^t \intr \nabla \varphi \cdot \lt( ((K_\alpha^\e - K_\alpha) \star \theta)\theta + K_\alpha^\e \star (\theta^\e - \theta) \theta + u^\e (\theta^\e -\theta)\rt) dxds\cr
&=: I^\e + II^\e + III^\e.
\end{align*} 
For $I^\e$, we obtain
\begin{align*}
I^\e &\leq \|(K_\alpha^\e - K_\alpha) {\bf 1}_{|x| \leq 1}\|_{L^1(\R^2)} \|\theta\|_{L^\infty((0,T)\times \R^2)}\|(\nabla \varphi)\theta\|_{L^1((0,T)\times \R^2)} \cr
&\quad +  \|(K_\alpha^\e - K_\alpha) {\bf 1}_{|x| \geq 1}\|_{L^\frac3{1+\alpha}(\R^2)} \|\theta\|_{L^\infty(0,T; L^{\frac3{2-\alpha}}(\R^2))}\|(\nabla \varphi)\theta\|_{L^1((0,T)\times \R^2)} 
\end{align*} 
and the dominated convergence theorem gives $I^\e \to 0$ as $\e \to 0$. 

We next estimate 
\begin{align*}
II^\e &\leq \lt(\|K_\alpha {\bf 1}_{|x| \leq 1}\|_{L^{p'}}\|\theta^\e - \theta\|_{L^p} +  \|K_\alpha {\bf 1}_{|x| \geq 1}\|_{L^\frac3{1+\alpha}} \|\theta^\e - \theta\|_{L^\infty(0,T; L^{\frac3{2-\alpha}})}\rt)\|\theta\nabla \varphi\|_{L^\infty(0,T;L^1)}\cr
&\leq C\lt(\|\theta^\e - \theta\|_{L^p} +  \|\theta^\e - \theta\|_{L^\infty(0,T; L^{\frac3{2-\alpha}})} \rt)\cr
&\to 0
\end{align*} 
as $\e \to 0$, where $p \in (\frac{2}{1-\alpha}, \infty)$ and $p'$ is the H\"older conjugate of $p$.

We finally show that
\[
III^\e \leq \|u^\e\|_{L^\infty((0,T) \times \R^2)}\|\theta^\e - \theta\|_{L^\infty} \|\nabla \varphi\|_{L^1} \leq C\|\theta^\e - \theta\|_{L^\infty} \to 0 \quad \mbox{as } \e \to 0.
\]
This completes the proof of the existence of H\"older continuous solutions $\theta \in \calC([0,T);\calC^{0,\beta}(\R^2))$ to \eqref{main_eq}.

%
%
%
%
%
%
\subsection{Uniqueness}\label{uniq_holder}
In this subsection, we discuss the uniqueness of solutions to \eqref{main_eq} on $L^1\cap \calC^{0,\beta}$ with $\beta \in (\alpha,1]$. 

For this, we need the following technical lemmas.
\begin{lemma}\cite{Li19}\label{frac_leib}
Let $d \in \N$. For $s>0$, suppose $A^s$ is a differential operator such that its symbol $\widehat {A^s}(\xi)$ is a homogeneous function of degree $s$ and $\widehat{A^s}(\xi) \in \mc^\infty(\mathbb{S}^{d-1})$. Then for any $p\in (1,\infty)$, we have
\[
\| A^s(fg) - fA^s g - gA^s f\|_{L^p} \lesssim \|f\|_{L^p} \|\Lambda^s g\|_{L^\infty}.
\]
\end{lemma}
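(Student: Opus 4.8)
\textbf{Proof plan for Lemma \ref{frac_leib}.}

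The plan is to establish this commutator estimate via a Littlewood--Paley paraproduct decomposition, which is the standard tool for such fractional Leibniz-type bounds. Writing $f = \sum_j \Delta_j f$ and $g = \sum_k \Delta_k g$ in terms of the usual dyadic blocks, I would decompose the product $fg$ using Bony's paraproduct formula into low-high, high-low, and resonant (diagonal) pieces: $fg = T_f g + T_g f + R(f,g)$. Applying $A^s$ to this and subtracting $fA^sg + gA^sf$, the key observation is that in the term $T_f g$ the frequencies of $f$ are much smaller than those of $g$, so on each block one can expand $A^s(S_{k-1}f\,\Delta_k g) - S_{k-1}f\,A^s\Delta_k g$ and exploit that the symbol $\widehat{A^s}$ is smooth away from the origin and homogeneous of degree $s$; a first-order Taylor expansion of the symbol around the center of the high-frequency block produces a gain of one derivative on the low-frequency factor and a loss of $(s-1)$ derivatives (i.e. effectively $2^{k(s-1)}$) on the high-frequency factor, together with a commutator-type kernel bound. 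The remaining genuine paraproduct pieces $T_{A^sg}f$ and $T_{A^sf}g$ and the resonant term are grouped so that each surviving term carries all $s$ derivatives on $g$ (measured in $L^\infty$) and none on $f$ (measured in $L^p$).

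Concretely, the key steps in order would be: (i) reduce, by the paraproduct decomposition, to estimating $\sum_k \big(A^s(S_{k-1}f\,\Delta_k g) - S_{k-1}f\,A^s\Delta_k g\big)$, plus the symmetric term with $f,g$ exchanged, plus $\sum_k A^s(\Delta_k f\,\widetilde\Delta_k g) - (\text{corrections})$ from the resonant part; (ii) for the principal commutator term, write $A^s(S_{k-1}f\,\Delta_k g)(x) - S_{k-1}f(x)\,A^s\Delta_k g(x)$ as an integral against the kernel $\check{m}_s$ of a rescaled symbol, use the mean value theorem on $S_{k-1}f$, and bound $\|\nabla S_{k-1}f\|_{L^p} \lesssim \|f\|_{L^p}$ (Bernstein, summable because the extra $2^{-k}$ beats any polynomial growth) and $\|A^s\Delta_k g\|_{L^\infty} \sim 2^{ks}\|\Delta_k g\|_{L^\infty} \lesssim \|\Lambda^s g\|_{L^\infty}$, after noting $\sum_k \|\Delta_k g\|_{L^\infty} 2^{k(s-1)}\cdot 2^k$ telescopes against the $L^\infty$ norm of $\Lambda^s g$ on the high frequencies and against $\|g\|_{L^\infty}\lesssim$ (harmless on low frequencies); (iii) for $T_{A^sf}g$ bound directly by $\|S_{k-1}A^sf\|_{L^p}\|\Delta_kg\|_{L^\infty}$ and recognize $A^sf = A^s(f\cdot 1)$ fits the left-hand side pattern, or more cleanly absorb it since $s>0$ makes $\|\Lambda^s f\|$ controllable — here one uses that only $g$ is asked to carry the $L^\infty$-derivative bound, so the roles are not symmetric and $T_{A^sf}g$ must be handled by putting $A^s$ on $f$ in $L^p$, which is legitimate because $f$'s frequencies are localized; (iv) sum the dyadic pieces using $\ell^1$-summability coming from the frequency separation in the paraproducts and the $\ell^1 \hookrightarrow \ell^2$, $\ell^2\hookrightarrow\ell^\infty$ embeddings for the resonant term (where one uses $p\in(1,\infty)$ to invoke the boundedness of the square function on $L^p$, i.e. the Littlewood--Paley characterization).

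The main obstacle I anticipate is the careful bookkeeping in step (ii): one must verify that the symbol $\widehat{A^s}(\xi)$, being smooth on $\mathbb S^{d-1}$ and homogeneous of degree $s$ but generally \emph{singular at the origin} when $s$ is not an even integer, still yields a well-localized, integrable kernel after dyadic restriction, so that the Coifman--Meyer-style commutator bound $\|A^s(ab) - aA^sb\|_{L^p}\lesssim \|\nabla a\|_{L^p}\|\Lambda^{s-1}b\|_{L^\infty}$ applies on each block; the homogeneity and smoothness-on-the-sphere hypotheses are exactly what make the rescaled kernels uniformly in $L^1$, but this needs to be stated precisely. A secondary subtlety is the low-frequency regime, where $\Lambda^s g$ may not be better than $\|g\|_{L^\infty}$; however since the lemma's right-hand side only sees $\|\Lambda^s g\|_{L^\infty}$ one restricts attention to how $A^s$ acts, and the finitely many low blocks contribute a bounded multiple of $\|f\|_{L^p}\|g\|_{L^\infty}$, which is dominated once one notes (or additionally assumes, as is implicit) that the estimate is meant for functions for which $\Lambda^s g \in L^\infty$ controls the relevant low-frequency mass, or alternatively one simply cites the reference \cite{Li19} for this borderline accounting. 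I would present the paraproduct argument as the core and defer the finest kernel estimates to the cited source.
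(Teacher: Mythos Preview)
The paper does not supply a proof of this lemma: it is simply quoted from \cite{Li19} and used as a black box in Section~\ref{uniq_holder}. So there is no ``paper's own proof'' to compare against beyond the citation itself.

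Your paraproduct strategy is in fact the approach taken in the cited reference, and the overall plan---Bony decomposition, symbol Taylor expansion on the low--high pieces, square-function bounds for the diagonal piece---is correct. One point deserves sharpening: in step~(iii) you seem unsure how to handle the piece coming from $T_g f$ (with $g$ low-frequency, $f$ high-frequency), and suggest putting $A^s$ on $f$ in $L^p$. That by itself would not close, since the right-hand side only allows $\|f\|_{L^p}$. The correct bookkeeping is that after subtracting \emph{both} $fA^sg$ and $gA^sf$, the relevant symbol on each block is the second-order remainder $m(\xi)-m(\xi-\eta)-m(\eta)$, and the Taylor expansion in the low-frequency variable $\eta$ yields the gain needed to place all $s$ derivatives on $g$ in $L^\infty$ and leave $f$ underived in $L^p$, with genuine dyadic decay for the sum. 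This is exactly the mechanism worked out in \cite{Li19}, so your instinct to defer the finest kernel and low-frequency accounting to that source is appropriate; just be aware that step~(iii) as written would not go through without the double subtraction.
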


\begin{lemma}\cite{CJ1}\label{lem_mod}
Let $d \in \N$. For $T>0$, suppose that the pairs $(\bar\rho, \bar u)$ and $(\rho,u)$ satisfy the followings:
\begin{enumerate}
\item[(i)]
$(\bar\rho, \bar u)$ and $(\rho,u)$ satisfy the continuity equations in the sense of distribution:
\[
\pa_t \bar\rho + \nabla \cdot (\bar\rho \bar u) =0 \quad \mbox{and} \quad \pa_t \rho + \nabla \cdot (\rho u) =0,
\]
\item[(ii)]
$(\bar\rho, \bar u)$ and $(\rho,u)$ satisfy the energy inequality:
\[
\sup_{0\le t \le T}\lt( \int_{\R^d} \bar\rho |\bar u|^2\,dx + \int_{\R^d} \bar\rho K\star\bar\rho\,dx\rt) <\infty, \quad \sup_{0\le t \le T}\lt( \int_{\R^d} \rho | u|^2\,dx + \int_{\R^d} \rho K\star\rho\,dx\rt) <\infty,
\]
\item[(iii)]
$\bar\rho, \rho\in \mc(0,T; L^1(\R^d))$, $\nabla u\in L^\infty((0,T) \times \R^d)$.
\end{enumerate}
Then for any $\gamma \in [-1, 0)$, we have
\[
\begin{aligned}
\frac12\frac{d}{dt}\int_{\R^d} (\rho -\bar\rho) \Lambda^{-2\gamma} \star (\rho - \bar\rho)\,dx &\leq \int_{\R^d} \bar\rho(u - \bar u) \cdot \nabla \Lambda^{-2\gamma} (\rho - \bar\rho)\,dx  + C\int_{\R^d} (\rho - \bar\rho) \Lambda^{-2\gamma} (\rho - \bar\rho)\,dx
\end{aligned}
\]
for $t \in [0,T)$ and some $C>0$ which depends only on $\gamma$, $d$ and $\|\nabla u\|_{L^\infty((0,T) \times \R^d)}$.
\end{lemma}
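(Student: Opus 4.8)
The plan is to run a modulated-energy (relative-entropy type) estimate. Set $\eta := \rho - \bar\rho$. Subtracting the two continuity equations and writing $\rho u - \bar\rho\bar u = \eta u + \bar\rho(u-\bar u)$ gives, in the sense of distributions, $\partial_t\eta + \nabla\cdot(\eta u) + \nabla\cdot(\bar\rho(u-\bar u)) = 0$. Since $\Lambda^{-2\gamma}$ is symmetric, positive, and independent of time, and factors as $\Lambda^{-2\gamma} = \Lambda^{-\gamma}\circ\Lambda^{-\gamma}$, the functional $E(t) := \int_{\R^d}\eta\,\Lambda^{-2\gamma}\eta\,dx = \|\Lambda^{-\gamma}\eta\|_{L^2}^2$ is nonnegative; pairing the equation for $\eta$ with $\Lambda^{-2\gamma}\eta$ yields, at least formally,
\[
\frac12\frac{d}{dt}E(t) = \int_{\R^d}\bar\rho(u-\bar u)\cdot\nabla\Lambda^{-2\gamma}\eta\,dx + \int_{\R^d}\eta\,u\cdot\nabla\Lambda^{-2\gamma}\eta\,dx .
\]
The first term on the right is exactly the source term appearing in the statement, so everything reduces to the commutator bound $\big|\int_{\R^d}\eta\,u\cdot\nabla\Lambda^{-2\gamma}\eta\,dx\big|\le C(d,\gamma,\|\nabla u\|_{L^\infty})\,E(t)$.

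For this bound I would symmetrize. Writing $\partial_j\Lambda^{-2\gamma}(u_j\eta) = u_j\,\partial_j\Lambda^{-2\gamma}\eta + [\partial_j\Lambda^{-2\gamma},u_j]\eta$, testing against $\eta$, summing over $j$, and integrating by parts (using the self-adjointness of $\Lambda^{-2\gamma}$ and the antisymmetry of $\partial_j$) one obtains the clean identity
\[
2\int_{\R^d}\eta\,u\cdot\nabla\Lambda^{-2\gamma}\eta\,dx = -\sum_{j}\int_{\R^d}\eta\,[\partial_j\Lambda^{-2\gamma},u_j]\,\eta\,dx .
\]
The operator $\partial_j\Lambda^{-2\gamma}$ has order at most one in the range $\gamma\in[-1,0)$, so its commutator with multiplication by the Lipschitz component $u_j$ is a Calderón-type commutator of nonpositive order with operator norm controlled by $\|\nabla u\|_{L^\infty}$; quantitatively this is exactly the content of the fractional Leibniz rule of Lemma~\ref{frac_leib} (applied, after the appropriate splitting, to $\Lambda^{-\gamma}$, which has positive order $-\gamma\in(0,1]$ and a smooth homogeneous symbol). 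Combined with the fact that $\rho,\bar\rho\in\calC(0,T;L^1(\R^d))$ and the finite-energy hypothesis place $\eta$ in the homogeneous Sobolev spaces dual to one another on which this commutator acts, one concludes $\big|\int\eta\,[\partial_j\Lambda^{-2\gamma},u_j]\eta\big|\lesssim\|\nabla u\|_{L^\infty}E(t)$, which is the desired bound. The restriction $|\gamma|\le 1$ is precisely what makes the mere Lipschitz regularity $\nabla u\in L^\infty$ sufficient.

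Finally, the identity for $\frac{d}{dt}E$ must be justified for genuinely weak solutions of the low regularity stated. The plan is the standard renormalization/commutator argument: mollify $\eta_\delta := \eta\star\chi_\delta$, write the \emph{exact} evolution of $\int\eta_\delta\Lambda^{-2\gamma}\eta_\delta$, control the Friedrichs (DiPerna--Lions) commutator errors coming from the two transport terms --- this is where the hypothesis $\nabla u\in L^\infty((0,T)\times\R^d)$ is used, ensuring these errors tend to zero as $\delta\to0$ --- and pass to the limit using $\rho,\bar\rho\in\calC(0,T;L^1(\R^d))$ together with the kinetic-plus-interaction energy inequality to give meaning to $E(t)$ and to the source integral; the limiting step produces the stated inequality rather than an equality. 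I expect the main obstacle to be the commutator bound itself: because $\nabla\Lambda^{-2\gamma}\eta$ is genuinely rougher than $\eta$, the bilinear form $\int\eta\,u\cdot\nabla\Lambda^{-2\gamma}\eta$ is not absolutely convergent term by term and cannot be estimated by a pointwise bound on the kernel of $\Lambda^{-2\gamma}$ (such an estimate only controls the larger quantity obtained by replacing $\eta$ with $|\eta|$); one must use in an essential way the cancellation encoded in the commutator identity above and in Lemma~\ref{frac_leib}, keeping careful track of the low-frequency contribution so that only $\|\nabla u\|_{L^\infty}$ and $E(t)$ appear on the right. The mollification step is a secondary but still nontrivial point.
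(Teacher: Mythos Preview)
The paper does not give its own proof of this lemma; it is quoted from the reference \cite{CJ1} and used as a black box in the uniqueness argument of Section~\ref{uniq_holder}. So there is no proof in the paper to compare your attempt against.

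That said, your outline is the expected one and almost certainly matches what \cite{CJ1} does: write the equation for $\eta=\rho-\bar\rho$, pair with $\Lambda^{2\gamma}\eta$, and reduce the transport term to the commutator $[\nabla\Lambda^{2\gamma},u]$, which is then controlled by the Kato--Ponce / fractional Leibniz estimate (Lemma~\ref{frac_leib}) using only $\|\nabla u\|_{L^\infty}$. One caution: the lemma as printed has a sign inconsistency---with $\gamma\in[-1,0)$ the functional $\int\eta\,\Lambda^{-2\gamma}\eta$ is a \emph{positive} Sobolev norm and $\partial_j\Lambda^{-2\gamma}$ has order $1-2\gamma\in(1,3]$, not ``at most one'' as you write. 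The application in Section~\ref{uniq_holder} (where $\gamma=\alpha-1$ and the functional is $\|\cdot\|_{\dot H^{\alpha-1}}^2=\int\cdot\,\Lambda^{2(\alpha-1)}\cdot$) makes clear that the intended operator is $\Lambda^{2\gamma}$, of order $2\gamma\in[-2,0)$; with that correction your order count and commutator bound go through. Your symmetrization identity and the mollification/DiPerna--Lions step are both correct in spirit.
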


Now, we assume that we have two solutions $\theta_1$ and $\theta_2$ to \eqref{main_eq} correpsonding to initial data $\theta_{1,0}$ and $\theta_{2,0}$, respectviely. Once we write $u_i = \nabla^\perp \Lambda^{-2+\alpha}\theta_i$ for $i=1,2$, we use Lemmas \ref{frac_leib} and \ref{lem_mod} to obtain
\[\begin{aligned}
\frac12\frac{d}{dt}\|\theta_1 - \theta_2\|_{\dot{H}^{\alpha-1}}^2 &\le C \intr \theta_1 (u_1 - u_2)\cdot \nabla \Lambda^{-2+2\alpha}(\theta_1 - \theta_2)\,dx + C\|\theta_1 - \theta_2\|_{\dot{H}^{\alpha-1}}^2 \\
&=\intr \Big[\Lambda^{-1+\alpha}\nabla \cdot (\theta_1 (u_1 - u_2)) - \Lambda^{-1+\alpha}\nabla \theta_1 \cdot (u_1 -u_2)  \\
&\hspace{4.8cm}- \theta_1 \Lambda^{-1+\alpha}\nabla \cdot (u_1 -u_2) \Big] \Lambda^{-1+\alpha}(\theta_1 - \theta_2)\,dx\\
&\quad +\intr  \Lambda^{-1+\alpha}\nabla \theta_1 \cdot (u_1 -u_2) \ \Lambda^{-1+\alpha}(\theta_1 - \theta_2)\,dx +  C\|\theta_1 - \theta_2\|_{\dot{H}^{\alpha-1}}^2\\
&\le C \|\Lambda^\alpha \theta_1\|_{L^\infty} \|u_1 -u_2\|_{L^2} \|\theta_1 - \theta_2\|_{\dot{H}^{\alpha-1}}+ C\|\Lambda^{-1+\alpha}\nabla \theta_1\|_{L^\infty} \|u_1 -u_2\|_{L^2} \|\theta_1 - \theta_2\|_{\dot{H}^{\alpha-1}} \\
&\quad+ C\|\theta_1 -\theta_2\|_{\dot{H}^{\alpha-1}}^2\\
&\le C\|\theta_1 -\theta_2\|_{\dot{H}^{\alpha-1}}^2,
\end{aligned}\]
 where we used $\|u_1 - u_2\|_{L^2} = \|\nabla^\perp \Lambda^{-2+\alpha}(\theta_1 -\theta_2)\|_{L^2} \lesssim \|\theta_1 -\theta_2\|_{\dot{H}^{\alpha-1}}$ and
 \[\begin{aligned}
 \Lambda^\alpha \theta_1 &= \intr \frac{\theta_1(x) - \theta_1(y)}{|x-y|^{2+\alpha}}\,dy\\
 &= \lt(\int_{\{|x-y|\le 1\}} + \int_{\{ |x-y|>1\}} \rt)\frac{\theta_1(x) - \theta_1(y)}{|x-y|^{2+\alpha}}\,dy\\
 &\le C[\theta_1]_{\mc^{0,\beta}}\int_{\{|x-y|\le 1\}} \frac{1}{|x-y|^{2+\alpha-\beta}}\,dy + C\|\theta_1\|_{L^\infty} \int_{\{ |x-y|>1\}}\frac{1}{|x-y|^{2+\alpha}}\,dy\\
 &\le C\|\theta_1 \|_{\mc^{0,\beta}}.
 \end{aligned}\]
 We can also apply the same estimates to $\|\Lambda^{-1+\alpha}\nabla\theta_1\|_{L^\infty}$ and hence, we use Gr\"onwall's lemma to yield
 \[
 \|\theta_1 - \theta_2\|_{\dot{H}^{\alpha-1}}^2 \le C\|\theta_{1,0} - \theta_{2,0}\|_{\dot{H}^{\alpha-1}}^2 
 \] 
 and from which we attain the desired uniqueness.
 
%
%
%
%
%
%

\subsection{Stability in the little H\"older space}\label{sec:stab}

In this part, we provide the stability of solutions to \eqref{main_eq} in little H\"older spaces. First, we define the little H\"older spaces as
\[
c^{k,\gamma}(\R^2):= \lt\{ f \in \mc^{k,\gamma}(\R^2) \ \bigg| \ \lim_{R \to 0}\sup_{|x-y|\le R}\frac{|D^k f(x) - D^k f(y)|}{|x-y|^\gamma} = 0 \rt\},
\]
which is the closure of $\mc^\infty(\R^2)$ in the usual H\"older norm from $\mc^{k,\gamma}(\R^2)$. Now, for given $\theta_0 \in c^{0,\beta}$, $T>0$ and the corresponding unique solution $\theta \in \mc(0,T; c^{0,\beta})$, we show that for any $\e>0$, there exists $\delta>0$ such that whenever $\tilde\theta_0 \in c^{0, \beta}$ satisfies $\|\theta_0 - \tilde\theta_0\|_{\mc^{0,\beta}}<\delta$, the unique solution $\tilde\theta$ corresponding to $\tilde\theta_0$ satisfies
\[
\sup_{t \in [0,T)} \|(\theta- \tilde\theta)(t)\|_{\mc^{0,\beta}} <\e.
\]

For this, consider a standard mollifier $\phi_\nu(x) =: \frac{1}{\nu^2} \phi(\frac{x}{\nu})$ and let $\theta^\nu := \phi_\nu \star \theta$ define $\tilde\theta^\nu$ similarly. Then one has
\[
\|(\theta - \tilde\theta)(t)\|_{\mc^{0,\beta}} \le \|(\theta-\theta^\nu)(t)\|_{\mc^{0,\beta}} + \|(\theta^\nu - \tilde\theta^\nu)(t)\|_{\mc^{0,\beta}} + \|(\tilde\theta^\nu - \tilde\theta)(t)\|_{\mc^{0,\beta}}
\]
Here, we first need to show that $ \sup_{t\in [0,T)}\|(\theta-\theta^\nu)(t)\|_{\mc^{0,\beta}}$ and $\sup_{t\in [0,T)} \|(\tilde\theta^\nu - \tilde\theta)(t)\|_{\mc^{0,\beta}}$ goes to 0 as $\nu$ tends to 0. For this, one easily sees that
\[\begin{aligned}
\|(\theta-\theta^\nu)(t)\|_{L^\infty} &\le \sup_{x \in \R^2}\int_{|y|\le \nu} \phi_\nu(y)|\theta(x-y) - \theta(x)|\,dy\le \|\theta(t)\|_{\mc^{0,\beta}} \nu^\beta.
\end{aligned}\]
Moreover, one observes that
\[\begin{aligned}
&\frac{|(\theta-\theta^\nu)(x) - (\theta-\theta^\nu)(y)|}{|x-y|^\beta}\cr
&\quad \le \frac{|(\theta-\theta^\nu)(x) - (\theta-\theta^\nu)(y)|}{|x-y|^\beta} \mathds{1}_{\{|x-y|< \sqrt \nu\}}  + \frac{|(\theta-\theta^\nu)(x) - (\theta-\theta^\nu)(y)|}{|x-y|^\beta} \mathds{1}_{\{ |x-y| \ge \sqrt\nu\}}\\
&\quad \le \sup_{|x-y|<\sqrt \nu } \frac{|\theta(x) - \theta(y)|}{|x-y|^\beta} + \sup_{|x-y|<\sqrt \nu } \int_{|z|\le \nu}\phi_\nu(z) \frac{|\theta(x-z)-\theta(y-z)|}{|x-y|^\beta}\,dz + 2\nu^{\frac\beta2}\|\theta(t)\|_{\mc^{0,\beta}}\\
&\quad \le 2 \sup_{|x-y|<\sqrt \nu } \frac{|\theta(x) - \theta(y)|}{|x-y|^\beta} + 2\nu^{\frac\beta2}\|\theta(t)\|_{\mc^{0,\beta}}\\
&\quad \to 0
\end{aligned}\]
as $\nu$ tends to 0. Thus, we can choose $\nu_1 = \nu_1(\e)$ such that
\[
 \sup_{t\in [0,T)}\|(\theta-\theta^{\nu_1})(t)\|_{\mc^{0,\beta}}<\frac\e3 \quad  \mbox{and}\quad \sup_{t\in [0,T)} \|(\tilde\theta^{\nu_1} - \tilde\theta)(t)\|_{\mc^{0,\beta}} <\frac\e3.
\]
and we note that 
\[
\|f\|_{\mc^{0,\beta}} \lesssim \||\xi|^\beta \hat f\|_{L_\xi^1} \lesssim \|f\|_{\dot{H}^{2	}}^{\frac{2 + \beta-\alpha}{3 -\alpha}}\|f\|_{\dot{H}^{\alpha-1}}^{\frac{1-\beta}{3-\alpha}},
\]
i.e. $\dot{H}^{2} \cap \dot{H}^{\alpha-1} \hookrightarrow \mc^{0,\beta}$. Thus we get
\[\begin{aligned}
\|(\theta - \tilde\theta)(t)\|_{\mc^{0,\beta}} &\le \frac{2}{3}\e +C \|(\theta^{\nu_1} - \tilde\theta^{\nu_1})(t)\|_{\dot{H}^{2}}^{\frac{2 + \beta-\alpha}{3 -\alpha}}\|(\theta^{\nu_1} - \tilde\theta^{\nu_1})(t)\|_{\dot{H}^{\alpha-1}}^{\frac{1-\beta}{3-\alpha}},
\end{aligned}\]
where $C > 0$ is independent of $\nu_1$, and we note that
\bq\label{moll_neg}
\|\phi_\e \star f\|_{\dot{H}^{\alpha-1}} \le \|f\|_{\dot{H}^{\alpha-1}}.
\eq
Indeed, choose any $\psi \in \dot{H}^{1-\alpha}$ with $\|\psi\|_{\dot{H}^{1-\alpha}} \le 1$. Then we get
\[\begin{aligned}
\lt|\intr \psi \phi_\e \star f \,dx\rt| &= \lt|\intr f \phi_\e \star \psi\,dx\rt| \le \|f\|_{\dot{H}^{\alpha-1}}\|\phi_\e \star \psi\|_{\dot{H}^{1-\alpha}},
\end{aligned}\]
and since the change of variables $(x', y') := (x-w, y-w)$ gives
\[\begin{aligned}
\|\phi_\e \star \psi\|_{\dot{H}^{1-\alpha}}^2 &= \iint_{\R^2 \times \R^2}\frac{|\phi_\e\star\psi (x) -\phi_\e\star\psi(y)|^2}{|x-y|^{2+2(1-\alpha)}}\,dxdy\\
&\le \iint_{\R^2\times \R^2} \int_{|w|\le \e} \frac{\phi_\e(w) |\psi(x-w) - \psi(y-w)|^2}{|x-y|^{2+2(1-\alpha)}}\,dwdxdy\\
&= \iint_{\R^2\times \R^2} \int_{|w|\le \e} \frac{\phi_\e(w) |\psi(x') - \psi(y')|^2}{|x'-y'|^{2+2(1-\alpha)}}\,dwdx'dy' \cr
&= \|\psi\|_{\dot{H}^{1-\alpha}}^2,
\end{aligned}\]
 we arrive at \eqref{moll_neg}. Hence, one has
 \[
\|(\theta^{\nu_1} - \tilde\theta^{\nu_1})(t)\|_{\dot{H}^{\alpha-1}} \le \|(\theta - \tilde\theta)(t)\|_{\dot{H}^{\alpha-1}},
 \]
 and from the uniqueness result, we get
\[
\|(\theta - \tilde\theta)(t)\|_{\dot{H}^{\alpha-1}} \le C \|\theta_0 - \tilde\theta_0\|_{\dot{H}^{\alpha-1}},
\]
where $C > 0$ only depends on $\|\theta\|_{\mc(0,T;\mc^{0,\beta})}$, $\|\tilde\theta\|_{\mc(0,T;\mc^{0,\beta})}$,$T$, $\alpha$ and $\beta$. Thus, we obtain
\[\begin{aligned}
\|(\theta - \tilde\theta)(t)\|_{\mc^{0,\beta}} &\le \frac{2\e}{3} +C_1\|\theta_0 - \tilde\theta_0\|_{\dot{H}^{\alpha-1}}^{\frac{1-\beta}{\frac d2 +2-\alpha}},
\end{aligned}\]
where $C_1 > 0$ depends on $\nu_1$, $\|\theta\|_{\mc(0,T;\mc^{0,\beta})}$, $\|\tilde\theta\|_{\mc(0,T;\mc^{0,\beta})}$, $T$, $\alpha$ and $\beta$. We remark that $C_1$ may grow to infinity as $\e$ tends to 0. Thus, once we choose $\delta$ as
\[
\delta =: \lt( \frac{\e}{3C_1}\rt)^{\frac{\frac d2 + 2-\alpha}{1-\beta}},
\]
one has
\[
\sup_{t \in [0,T)}\|(\theta - \tilde\theta)(t)\|_{\mc^{0,\beta}} < \e,
\]
and this implies our desired stability result.
%
%
%
%
%
%
\section{Strong ill-posedness in $\calC^{0,\alpha}(\R^2)$}\label{sec_ill}

In this section, we provide the details of the proof for Theorem \ref{thm:inf}.

%

\subsection{Setup of initial data}\label{sec_ini}
We fix $n_0 \geq 1$ and some smooth radial bump function $\phi: \R^2\rightarrow \R$ satisfying 
\[
{\bf 1}_{B(0, \frac1{64})} \leq \phi \leq {\bf 1}_{B(0, \frac1{32})}.
\]
%
%
Then for any $N \geq n_0$, we define
\begin{equation}\label{eq:nonexist-data}
	\begin{split}
		\theta_0^{(N)} :=  \sum_{n=n_{0}}^{N}  n^{-\beta} \theta^{(n)}_{0, loc}
	\end{split}
\end{equation}
for some $0 < \beta < 1/4$, where
\begin{equation*}
	\begin{split}
		\theta^{(n)}_{0, loc}(x) := 4^{-\alpha n} \phi( 4^n(x_1-4^{-n-2}, x_2-4^{-n-2} ) )
	\end{split}
\end{equation*}
for $x=(x_1, x_2) \in \R^2$. The specific value of $\beta$ slightly greater than $0$ will be determined later. Next, we extend each of $\theta^{(n)}_{0, loc}$ (and similarly $\theta_{0}^{(N)}$) to $\R^2$ as an odd function with respect to both axes. For any given $\varepsilon>0$, one can have by taking $n_{0}\ge 1$ large enough
\begin{equation}\label{small_ini}
\| \theta_0^{(N)} \|_{\calC^{0,\alpha}} <\varepsilon 
\end{equation}
for all $N \geq n_0$.

\subsection{Velocity approximation for odd-odd solutions}\label{sec_app}
In this subsection, we shall state the velocity estimates for odd-odd solutions from which we have the norm inflation of the family of solutions with the prescribed initial data. For convenience, we shall normalize the SQG Biot--Savart law in a way that  
\begin{equation*}
	u(t,x) =  \int_{\R^2} \frac {(x -y)^{\perp}}{|x - y|^{2+\alpha}} \theta(t,y) \,dy.
\end{equation*} 
\begin{lemma}\label{key_lem}
Let $\theta \in \calC^{0,\alpha}_c$ be an odd function with respect to both axes, i.e., $\theta(x) = - \theta(\bar{x}) = \theta(-x) = -\theta(\tilde{x})$ where $\bar{x} := (x_1,-x_2)$ and $\tilde{x} := (-x_1,x_2)$.
	Then, for any $x$ satisfying $x_1$, $x_2>0$ and $|x| < \frac14$, we have 
	\begin{equation}\label{u1}
	\begin{gathered}
		\left|\frac {u_1(x)}{x_1} - 4(2+\alpha)\int_{Q(x)} \frac { y_1 y_2 }{|y|^{4+\alpha}} \theta(y) \,dy\right| \le C\| \theta \|_{\calC^{0,\alpha}} \left( 1+ \log \frac {x_1^2 + x_2^2}{x_1^2} \right)
	\end{gathered}
	\end{equation} and
	\begin{equation}\label{u2}
	\begin{gathered}
		\left|\frac {u_2(x)}{x_2} + 4(2+\alpha)\int_{Q(x)} \frac { y_1 y_2 }{|y|^{4+\alpha}} \theta(y) \,dy  \right| \le C\| \theta \|_{\calC^{0,\alpha}} \left( 1+ \log \frac {x_1^2 + x_2^2}{x_2^2} \right),
	\end{gathered}
	\end{equation} where $Q(x) := [2x_1,\infty) \times [2x_2,\infty)$.
\end{lemma}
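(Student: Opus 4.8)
The plan is to exploit the odd-odd symmetry to rewrite $u_1(x)$ as an integral over the first quadrant only, and then perform a careful local-versus-global decomposition around the point $x$. First I would use the symmetry $\theta(y) = -\theta(\bar y) = \theta(-y) = -\theta(\tilde y)$ to fold the Biot--Savart integral: writing
\[
u_1(x) = \int_{\R^2} \frac{-(x_2-y_2)}{|x-y|^{2+\alpha}} \theta(y)\,dy,
\]
and summing the contributions from $y$, $\bar y$, $-y$, $\tilde y$, I obtain a kernel supported on the first quadrant $\{y_1,y_2>0\}$ which is a combination of $|x-y|^{-(2+\alpha)}$, $|\bar x - y|^{-(2+\alpha)}$ (equivalently $|x-\bar y|^{-(2+\alpha)}$), etc. This is the standard Kiselev--Sverak-type reduction; the resulting kernel $K_1(x,y)$ has the key feature that when $|y| \gg |x|$ it behaves like a second-order Taylor expansion of the base kernel, producing the leading term $\propto x_1 \cdot y_1 y_2/|y|^{4+\alpha}$, while the lower-order terms are killed by the symmetry cancellations.

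Next I would split the first-quadrant integral into three regions relative to $x$: a far region $Q(x) = [2x_1,\infty)\times[2x_2,\infty)$, an intermediate/near region where $|y|$ is comparable to $|x|$ but $y$ is outside $Q(x)$, and possibly the complementary strips $\{0<y_1<2x_1\}$ and $\{0<y_2<2x_2\}$. On $Q(x)$, since $|x-y|, |\bar x - y|, |\tilde x - y| \simeq |y|$, I can Taylor-expand each kernel term around $x=0$; the zeroth and first order terms cancel by the odd symmetry of the folded kernel combination, leaving the claimed main term $4(2+\alpha)\int_{Q(x)} y_1 y_2 |y|^{-(4+\alpha)}\theta(y)\,dy$ plus a remainder controlled by $\|\theta\|_{L^\infty} x_1 \int_{Q(x)} |y|^{-(3+\alpha)}\,dy$, which after dividing by $x_1$ is $O(\|\theta\|_{L^\infty})$ once one is careful that $|y| \geq |x|$ on $Q(x)$ so the integral is $\lesssim |x|^{-\alpha} \lesssim$ bounded (here $|x|<1/4$ and we only need boundedness, not smallness — actually $\int_{Q(x)}|y|^{-3-\alpha}dy \simeq |x|^{-1-\alpha}$, so $x_1 \cdot |x|^{-1-\alpha}$ needs the factor structure; I would instead keep the angular integral and note the $y_1y_2$-type weights improve the power, or absorb the genuinely singular part into the $\log$ term). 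On the near region and the two strips, I would not Taylor-expand; instead I bound $|u_1(x)|$ directly. The strip $\{0<y_1<2x_1, y_2 \text{ large}\}$ contributes the logarithm: writing $\theta(y) = \theta(y) - \theta(y_1',y_2)|_{y_1'=0} = \theta(y)$ (using $\theta$ odd in $y_1$ so $\theta$ vanishes on $\{y_1=0\}$), one gets $|\theta(y)| \leq [\theta]_{\calC^{0,\alpha}} y_1^\alpha$ for $y_1$ small, and the integral $\int_0^{2x_1}\int \frac{y_1^\alpha}{|x-y|^{2+\alpha}}\cdots$ produces, after dividing by $x_1$, a term $\lesssim [\theta]_{\calC^{0,\alpha}}(1 + \log(|x|^2/x_1^2))$ coming from the $y_2$-integration over the range $x_1 \lesssim |x-y| \lesssim |x|$ near $y_2 \approx x_2$.

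The estimate \eqref{u2} is entirely symmetric: the same folding gives $u_2(x)$ with the opposite-sign kernel, so one repeats the argument with the roles of the first and second coordinates swapped, yielding the $+$ sign in front of the main term and the logarithm $\log((x_1^2+x_2^2)/x_2^2)$. I would organize the write-up so that \eqref{u1} is proved in detail and \eqref{u2} follows ``mutatis mutandis.''

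The main obstacle I anticipate is the bookkeeping in the near region: one must show that the part of the kernel that is genuinely singular at $y=x$ (and at $y=\bar x$, $y = \tilde x$) — i.e. the part that is \emph{not} captured by the far-field Taylor expansion — contributes, after dividing by $x_1$, something no worse than $\|\theta\|_{\calC^{0,\alpha}}(1 + \log(|x|^2/x_1^2))$. The subtlety is that the singularities at $\bar x$ and $\tilde x$ lie at distance $2x_2$ and $2x_1$ respectively from $x$, so the logarithm in \eqref{u1} with $x_1^2$ in the denominator is exactly the scale $\log(|x|/x_1)$ generated by integrating $|x - y|^{-2}$-type singularities over an annulus of inner radius $\sim x_1$ and outer radius $\sim |x|$; getting the precise scale right, and making sure the Hölder seminorm (rather than $\|\theta\|_{L^\infty}$, which would not vanish on the axis and hence would not give the $y_1^\alpha$ gain needed for integrability near $y_1 = 0$) is what is used in each strip, is where the care is needed. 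Everything else — the symmetry folding and the far-field Taylor expansion — is routine.
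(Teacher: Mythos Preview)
Your overall strategy---fold the Biot--Savart integral to the first quadrant via the odd-odd symmetry, split into the far region $Q(x)$ and the two complementary strips, and extract the main term on $Q(x)$ by expanding the kernel in $x$---matches the paper's. The gap is in how you regularize on the strips. You propose to bound $|\theta(y)|$ by $[\theta]_{\calC^{0,\alpha}}\, y_j^{\alpha}$ (using that $\theta$ vanishes on the axes). On the strip $[2x_1,\infty)\times[0,2x_2]$ this is too weak: after extracting the factor $x_1$ from the kernel (via the mean value theorem, or equivalently your odd-in-$x_1$ symmetry), the integrand is $\sim x_1\dfrac{(y_1-x_1)|x_2-y_2|}{|x-y|^{4+\alpha}}$, and with $|\theta(y)|\lesssim x_2^{\alpha}$ the $y_2$-integral followed by $\int_{x_1}^{x_2} r^{-1-\alpha}\,dr$ yields a contribution $\simeq x_1\,(x_2/x_1)^{\alpha}$. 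Dividing by $x_1$ gives $(x_2/x_1)^{\alpha}$, which for $x_2\gg x_1$ is strictly larger than the claimed $\log\bigl((x_1^2+x_2^2)/x_1^2\bigr)$; the lemma as stated does not follow. (Incidentally, the logarithm in \eqref{u1} comes from this strip, not from $\{0<y_1<2x_1\}$ as you anticipate.)

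The paper's missing device is to add and subtract $\theta(x)$: write the folded integral as $\int(\cdots)(\theta(y)-\theta(x))\,dy + \theta(x)\int(\cdots)\,dy$. On the strips, the first piece uses $|\theta(y)-\theta(x)|\le [\theta]_{\calC^{0,\alpha}}|x-y|^{\alpha}$, which on $[2x_1,\infty)\times[0,2x_2]$ converts the integrand from $|x-y|^{-4-\alpha}$ to $|x-y|^{-4}$ and hence the $y_1$-integral from $\int_{x_1}^{x_2} r^{-1-\alpha}\,dr$ to $\int_{x_1}^{x_2} r^{-1}\,dr=\log(x_2/x_1)$---exactly the required logarithm. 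The second piece, $\theta(x)\int(\cdots)\,dy$, is handled separately: the integral over $y_2\in[0,2x_2]$ vanishes by antisymmetry of the kernel about $y_2=x_2$, and on the remaining region one combines the kernel bound with $|\theta(x)|\le [\theta]_{\calC^{0,\alpha}}|x|^{\alpha}$ to get $O(x_1\|\theta\|_{\calC^{0,\alpha}})$. Your suggested fixes (``keep the angular integral'', ``absorb into the log term'') are too vague to recover this; on $Q(x)$ your Taylor-remainder is salvageable using $|\theta(y)|\lesssim |y|^{\alpha}$ (the paper does use this), but on the critical strip it is the $|x-y|^{\alpha}$ gain from subtracting $\theta(x)$---not the axis-vanishing of $\theta$---that produces the logarithm.
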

\begin{remark}
	A similar result can be obtained for the periodic case. The error bounds for \eqref{u1} and \eqref{u2} are an improvement of that of \cite[Lemma~A.1]{JK24}. Note that $H^{1+\alpha}(\R^2) \hookrightarrow \calC^{0,\alpha}(\R^2)$ for any $\alpha \in (0,1)$.
\end{remark}
\begin{remark}\label{rmk_R}
We may replace $Q(x)$ by $R(x) := [2x_1,\infty) \times [0,\infty)$ in \eqref{u1} since 
\begin{equation*}
	\begin{aligned}
		\left| \int_{R(x) \setminus Q(x)} \frac { y_1 y_2 }{|y|^{4+\alpha}} \theta(y) \,dy \right| &\leq \| \theta \|_{\calC^{0,\alpha}} \int_{0}^{2x_2} \int_{2x_1}^{\infty} \frac {y_1y_2 }{|y|^{4}} \,dy_1 dy_2 \\
		&\leq C\| \theta \|_{\calC^{0,\alpha}} \int_{0}^{2x_2} \frac {y_2 }{4x_1^2+y_2^2} \,dy_2 \cr
		&\leq C\| \theta \|_{\calC^{0,\alpha}} \log \frac {x_1^2 + x_2^2}{x_1^2} .
	\end{aligned}
\end{equation*}
\end{remark}
\begin{proof}[Proof of Lemma \ref{key_lem}]
Our main idea of the proof is motivated by \cite[Lemma A.1]{JK24} and \cite[Lemma 2.1]{Z15}. Let $x$ and $\theta$ satisfy all assumptions for Lemma~\ref{key_lem}. Then, we have the Biot--Savart law that
	\begin{equation*}
	\begin{gathered}
		u(x) =\int_{[0,\infty)^2} \left( \frac {(x -y)^{\perp}}{|x - y|^{2+\alpha}} - \frac {(x - \tilde{y})^{\perp}}{|x - \tilde{y}|^{2+\alpha}} + \frac {(x + y)^{\perp}}{|x + y|^{2+\alpha}} - \frac {(x - \bar{y})^{\perp}}{|x - \bar{y}|^{2+\alpha}} \right) (\theta(y)-\theta(x)) \,dy \\
		+ \theta(x) \int_{[0,\infty)^2} \frac {(x -y)^{\perp}}{|x - y|^{2+\alpha}} - \frac {(x - \tilde{y})^{\perp}}{|x - \tilde{y}|^{2+\alpha}} + \frac {(x + y)^{\perp}}{|x + y|^{2+\alpha}} - \frac {(x - \bar{y})^{\perp}}{|x - \bar{y}|^{2+\alpha}} \,dy.
	\end{gathered}
\end{equation*} Let $u_1(x) =  I_1 + I_2 + II_1 + II_2$, where
	\begin{equation*}
		\begin{aligned}
			I_1 &:= -\int_{[0,\infty)^2} \left( \frac {x_2 -y_2}{|x - y|^{2+\alpha}} - \frac {x_2 -y_2}{|x - \tilde{y}|^{2+\alpha}} \right) (\theta(y) -\theta(x)) \,dy, \\
			I_2 &:= -\int_{[0,\infty)^2} \left( \frac {x_2 + y_2}{|x + y|^{2+\alpha}} - \frac {x_2 + y_2}{|x - \bar{y}|^{2+\alpha}} \right) (\theta(y) -\theta(x)) \,dy, \\
			II_1 &= - \theta(x) \int_{[0,\infty)^2} \frac {x_2-y_2}{|x - y|^{2+\alpha}} - \frac {x_2-y_2}{|x - \tilde{y}|^{2+\alpha}} \,dy, \quad \mbox{and} \\
			II_2 &= - \theta(x) \int_{[0,\infty)^2} \frac {x_2+y_2}{|x + y|^{2+\alpha}} - \frac {x_2+y_2}{|x - \bar{y}|^{2+\alpha}} \,dy.
		\end{aligned}
	\end{equation*} 
	We first estimate $I_1$. To show that 
	\bq\label{I1_est}
	\left| I_1 - 2(2+\alpha)x_1 \int_{Q(x)} \frac {y_1y_2}{|y|^{4+\alpha}} \theta(y) \,dy \right| \leq Cx_1 \| \theta \|_{\calC^{0,\alpha}} \left( 1+ \log \frac {x_1^2 + x_2^2}{x_1^2} \right),
	\eq
	 we split the domain into three parts $\R^2 = Q(x) \cup \left( [0,2x_1] \times [0,\infty) \right)\cup \left([2x_1,\infty) \times [0,2x_2]\right)$ and estimate the integral on each region. 	\medskip

\paragraph{\bf (i) $y \in Q(x)$ case}	We claim that
	\begin{equation}\label{I1_claim}
	\begin{aligned}
		\left| -\int_{Q(x)} \left( \frac {x_2 -y_2}{|x - y|^{2+\alpha}} - \frac {x_2 -y_2}{|x - \tilde{y}|^{2+\alpha}} \right) (\theta(y) -\theta(x)) \,dy - 2(2+\alpha)x_1 \int_{Q(x)} \frac {y_1y_2}{|y|^{4+\alpha}} \theta(y) \,dy \right| \leq Cx_1 \| \theta \|_{\calC^{0,\alpha}}.
	\end{aligned}
\end{equation} Note that
	\begin{equation*}
	\begin{aligned}
		&-\int_{Q(x)} \left( \frac {x_2 -y_2}{|x - y|^{2+\alpha}} - \frac {x_2 -y_2}{|x - \tilde{y}|^{2+\alpha}} \right) (\theta(y) -\theta(x)) \,dy \\
		&\quad = -\int_{Q(x)} \left( \frac {x_2 -y_2}{|x - y|^{2+\alpha}} - \frac {x_2 -y_2}{|x - \tilde{y}|^{2+\alpha}} \right) \theta(y) \,dy +\int_{Q(x)} \left( \frac {x_2 -y_2}{|x - y|^{2+\alpha}} - \frac {x_2 -y_2}{|x - \tilde{y}|^{2+\alpha}} \right) \theta(x)\,dy.
	\end{aligned}
\end{equation*} It is not difficult to show that 
\begin{equation}\label{I1_i}
	\begin{aligned}
		\frac {1}{2} |y| \leq |x-y| \leq |y| \qquad \mbox{and} \qquad \frac {1}{2} |y| \leq |x-\tilde{y}| \leq \frac {3}{2}|y|, \qquad y \in Q(x).
	\end{aligned}
\end{equation} We define 
$g(t) = |y-(\tilde{x} - (1-t)(\tilde{x}-x))|^{2+\alpha}$ for $0 \leq t \leq 1$. Then, we use the mean value theorem to obtain
	\begin{align*}
		|\tilde x - y|^{2+\alpha} - |x - y|^{2+\alpha} 
		&= g(1)-g(0) \\
		&= 2(2+\alpha)x_1(\tau(y_1+x_1) + (1-\tau)(y_1- x_1)) |\tau(y-\tilde{x}) + (1-\tau)(y-x)|^{\alpha}
	\end{align*} for some $\tau \in (0,1)$. Thus, it follows from the above and the fact $|\tilde x - y| = |x - \tilde y|$ that 
	\begin{equation}\label{fg_est}
	\begin{aligned}
		\frac {x_2 -y_2}{|x - y|^{2+\alpha}} - \frac {x_2 -y_2}{|x - \tilde{y}|^{2+\alpha}} 
		= \frac {(x_2 -y_2)(g(1)-g(0))}{|x - y|^{2+\alpha}|x - \tilde{y}|^{2+\alpha}}.
	\end{aligned}
\end{equation}
	Hence, we combine this with \eqref{I1_i} that
	\begin{equation}\label{II_1}
	\begin{aligned}
		\left| \int_{Q(x)} \left( \frac {x_2 -y_2}{|x - y|^{2+\alpha}} - \frac {x_2 -y_2}{|x - \tilde{y}|^{2+\alpha}} \right) \theta(x)\,dy \right| &\leq |\theta(x)| \int_{Q(x)} \frac {|x_2 -y_2||g(1)-g(0)|}{|x - y|^{2+\alpha}|x - \tilde{y}|^{2+\alpha}} \,dy \\
		&\leq C|\theta(x)| \int_{Q(x)} \frac {x_1}{|y|^{2+\alpha}} \,dy \\
		&\leq Cx_1 \frac{|\theta(x)|}{|x|^\alpha}\cr
		& \le Cx_1\| \theta \|_{\calC^{0,\alpha}},
	\end{aligned}
\end{equation} 
where we used the odd-odd symmetry. Similarly, one can show that
\begin{equation*}
	\begin{aligned}
		\left| -\int_{Q(x)} \left( \frac {x_2}{|x - y|^{2+\alpha}} - \frac {x_2}{|x - \tilde{y}|^{2+\alpha}} \right) \theta(y) \,dy \right| &\leq \int_{Q(x)} \frac {x_2|g(1)-g(0)|}{|x - y|^{2+\alpha}|x - \tilde{y}|^{2+\alpha}} \theta(y) \,dy \\
		&\leq C\int_{Q(x)} \frac {x_1x_2}{|y|^{3}} \frac {\theta(y)}{|y|^{\alpha}}\,dy \\
		&\leq Cx_1 \| \theta \|_{\calC^{0,\alpha}}.
	\end{aligned}
\end{equation*}
Now, it remains to show that
\begin{equation*}
	\begin{aligned}
		\left| \int_{Q(x)} \left( \frac {y_2}{|x - y|^{2+\alpha}} - \frac {y_2}{|x - \tilde{y}|^{2+\alpha}} \right) \theta(y) \,dy - 2(2+\alpha)x_1 \int_{Q(x)} \frac {y_1y_2}{|y|^{4+\alpha}} \theta(y) \,dy \right| \leq Cx_1\| \theta \|_{\calC^{0,\alpha}}.
	\end{aligned}
\end{equation*}
For this, note that
\begin{equation*}
	\begin{aligned}
		&\frac {y_2}{|x - y|^{2+\alpha}} - \frac {y_2}{|x - \tilde{y}|^{2+\alpha}} - 2(2+\alpha)x_1 \frac {y_1y_2}{|y|^{4+\alpha}} \\
		&\quad= 2(2+\alpha)x_1y_2 \left( \frac {(y_1-(1-2\tau)x_1)|\tau(y-\tilde{x}) + (1-\tau)(y-x)|^{\alpha}}{|x - y|^{2+\alpha}|x - \tilde{y}|^{2+\alpha}} - \frac {y_1}{|y|^{4+\alpha}} \right) \\
		&\quad= 2(2+\alpha)x_1y_1y_2 \left( \frac {|y-((1-2\tau)x_1, x_2)|^{\alpha}}{|x - y|^{2+\alpha}|\tilde{x} - y|^{2+\alpha}} - \frac {1}{|y|^{4+\alpha}} \right) \cr
		&\qquad - 2(2+\alpha)(1-2\tau) x_1^2y_2 \frac {|\tau(y-\tilde{x}) + (1-\tau)(y-x)|^{\alpha}}{|x - y|^{2+\alpha}|x - \tilde{y}|^{2+\alpha}}.
	\end{aligned}
\end{equation*} Here, the second term is bounded by 
\[
Cx_1^2\frac {1}{|y|^{3+\alpha}}.
\]
Thus, we arrive at

\[
\left| \int_{Q(x)} 2(2+\alpha)(1-2\tau) x_1^2y_2 \frac {|\tau(y-\tilde{x}) + (1-\tau)(y-x)|^{\alpha}}{|x - y|^{2+\alpha}|x - \tilde{y}|^{2+\alpha}} \theta(y) \,dy \right| \leq Cx_1\| \theta \|_{\calC^{0,\alpha}}.
\]
 On the other hand, we apply the mean value theorem again to 
 
 \[
 h(\eta) = \left| \frac {y}{|y|} - \frac {((1-2\tau)x_1,x_2)}{|y|} - 2\eta \tau \frac {x_1e_1}{|y|} \right|^{\alpha} \left| \frac {y}{|y|} - \eta \frac {\tilde{x}}{|y|} \right|^{\alpha}, \qquad 0 \leq \eta \leq 1,
 \] 
 and yield 
\begin{equation*}
	\begin{aligned}
		&\left| \frac {|y-((1-2\tau)x_1, x_2)|^{\alpha}}{|x - y|^{2+\alpha}|\tilde{x} - y|^{2+\alpha}} - \frac {1}{|y|^{4+\alpha}} \right| \\
		&\quad= \frac {|y|^{4+2\alpha}}{|x - y|^{2+\alpha}|\tilde{x} - y|^{2+\alpha}|y|^{4+\alpha}}  \left| \frac {y}{|y|} - \frac {((1-2\tau)x_1, x_2)}{|y|} \right|^{\alpha} - \left| \frac {y}{|y|} - \frac {x}{|y|} \right|^{\alpha} \left| \frac {y}{|y|} - \frac {\tilde{x}}{|y|} \right|^{\alpha} \\
		&\qquad + \left| \frac {y}{|y|} - \frac {x}{|y|} \right|^{\alpha} \left| \frac {y}{|y|} - \frac {\tilde{x}}{|y|} \right|^{\alpha} - \left( 1 - \frac {2x \cdot y}{|y|^2} + \frac {|x|^2}{|y|^2} \right) \left( 1 - \frac {2\tilde{x} \cdot y}{|y|^2} + \frac{|\tilde{x}|^2}{|y|^2} \right) \left| \frac {y}{|y|} - \frac {x}{|y|} \right|^{\alpha} \left| \frac {y}{|y|} - \frac {\tilde{x}}{|y|} \right|^{\alpha} \Bigg| \\
		&\quad\leq \frac {|y|^{4+2\alpha}}{|x - y|^{2+\alpha}|\tilde{x} - y|^{2+\alpha}|y|^{4+\alpha}} |h(1) - h(0)| + C|x| \frac {1}{|y|^{5+\alpha}} \\
		&\quad\leq C|x| \frac {1}{|y|^{5+\alpha}},
	\end{aligned}
\end{equation*}
where we used
 \begin{equation*}
	\begin{aligned}
		h'(\eta)& = |y|^{-2\alpha} \big( \alpha |y-((1-2\tau)x_1,x_2) -2\eta \tau x_1e_1|^{\alpha-2} (4\eta \tau^2 x_1^2 - 2 \tau x_1 (y_1-(1-2\tau)x_1) ) |y-\eta \tilde{x}|^{\alpha} \\
		&\quad + |y-((1-2\tau)x_1,x_2) -2\eta \tau x_1e_1|^{\alpha} \alpha |y-\eta \tilde{x}|^{\alpha-2} (\eta |\tilde{x}|^2 - y \cdot \tilde{x}) \big) \\
		&\leq C\frac {|x|}{|y|}
	\end{aligned}
\end{equation*} for $y \in Q(x)$. So we attain
\[
\left| \int_{Q(x)} 2(2+\alpha)x_1y_1y_2 \left( \frac {|y-((1-2\tau)x_1, x_2)|^{\alpha}}{|x - y|^{2+\alpha}|\tilde{x} - y|^{2+\alpha}} - \frac {1}{|y|^{4+\alpha}} \right) \theta(y) \,dy \right| \leq Cx_1\| \theta \|_{\calC^{0,\alpha}}.\]
Thus, we combine the above estimates to obtain \eqref{I1_claim}.  	\medskip

\paragraph{\bf (ii) $y \in [0,2x_1] \times [0,\infty)$ case} First, we deduce from \eqref{fg_est} that
		\begin{equation*}
	\begin{gathered}
		-\int_{[0,2x_1] \times [0,\infty)} \left( \frac {x_2 -y_2}{|x - y|^{2+\alpha}} - \frac {x_2 -y_2}{|x - \tilde{y}|^{2+\alpha}} \right) (\theta(y) -\theta(x)) \,dy \\
		= -\int_{[0,2x_1] \times [0,\infty)}  \frac {(x_2 -y_2)(g(1)-g(0))}{|x - y|^{2+\alpha}|x - \tilde{y}|^{2+\alpha}} (\theta(y) - \theta(x)) \,dy.
	\end{gathered}
\end{equation*}
%
Since $|g(1) - g(0)| \leq Cx_1 (x_1+y_1)|\tilde{x}-y|^{\alpha}$ and $x_1+y_1 \leq 3x_1$ in this region, we have
\begin{equation*}
	\begin{aligned}
		&\left| -\int_{[0,2x_1] \times [0,\infty)}  \frac {(x_2 -y_2)(g(1)-g(0))}{|x - y|^{2+\alpha}|x - \tilde{y}|^{2+\alpha}} (\theta(y) - \theta(x)) \,dy \right| \\
		&\quad\leq Cx_1\| \theta \|_{\calC^{0,\alpha}} \int_{0}^{\infty} \frac {x_1}{x_1^2 + (x_2-y_2)^2} \lt(\int_{0}^{x_1} \frac {x_2 -y_2}{(x_1 - y_1)^2 + (x_2 - y_2)^{2}} \,dy_1\rt)dy_2 \\
		&\quad\leq Cx_1\| \theta \|_{\calC^{0,\alpha}} \int_{0}^{\infty} \frac {x_1}{x_1^2 + (x_2-y_2)^2} \,dy_2\cr
		& \quad \leq Cx_1\| \theta \|_{\calC^{0,\alpha}}.
	\end{aligned}
\end{equation*}

\paragraph{\bf (iii) $y \in [2x_1,\infty) \times [0,2x_2]$ case} We again deduce from \eqref{fg_est} that
		\begin{equation*}
	\begin{gathered}
		-\int_{[2x_1,\infty) \times [0,2x_2]} \left( \frac {x_2 -y_2}{|x - y|^{2+\alpha}} - \frac {x_2 -y_2}{|x - \tilde{y}|^{2+\alpha}} \right) (\theta(y) -\theta(x)) \,dy \\
		= -\int_{[2x_1,\infty) \times [0,2x_2]}  \frac {(x_2 -y_2)(g(1)-g(0))}{|x - y|^{2+\alpha}|x - \tilde{y}|^{2+\alpha}} (\theta(y) - \theta(x)) \,dy.
	\end{gathered}
\end{equation*} In this region, we have $|g(1) - g(0)| \leq Cx_1 (x_1+y_1)|\tilde{x}-y|^{\alpha}$ and $y_1 - x_1 \leq y_1 + x_1 \leq 3 (y_1 - x_1)$ for $y_1 \geq 2x_1$. This implies
	\begin{equation*}
	\begin{aligned}
		&\left| -\int_{[2x_1,\infty) \times [0,2x_2]}  \frac {(x_2 -y_2)(g(1)-g(0))}{|x - y|^{2+\alpha}|x - \tilde{y}|^{2+\alpha}} (\theta(y) - \theta(x)) \,dy \right| \\
		&\quad\leq Cx_1\| \theta \|_{\calC^{0,\alpha}} \int_{0}^{x_2} (x_2 -y_2)\lt( \int_{2x_1}^{\infty} \frac {y_1-x_1}{((x_1 - y_1)^2 + (x_2 - y_2)^{2})^2} \,dy_1\rt) dy_2 \\
		&\quad\leq Cx_1\| \theta \|_{\calC^{0,\alpha}} \int_{0}^{x_2} \frac {x_2 -y_2}{x_1^2 + (x_2 - y_2)^{2}} \,dy_2 \\
		&\quad\leq Cx_1 \| \theta \|_{\calC^{0,\alpha}} \log \frac {x_1^2+x_2^2}{x_1^2} .
	\end{aligned}
\end{equation*} 
Hence, we collect all the estimates up to now to attain \eqref{I1_est}.

Next, we estimate $II_1$. Due to the asymmetry of integrands around $y_2 = x_2$ on the interval $[0, 2x_2]$, we have 
\[
II_1 = - \theta(x) \int_{[0,\infty)\times [2x_2,\infty)} \frac {x_2-y_2}{|x - y|^{2+\alpha}} - \frac {x_2-y_2}{|x - \tilde{y}|^{2+\alpha}} \,dy.\]
Thanks to \eqref{II_1}, it suffices to show that 
	\begin{equation*}
	\begin{aligned}
		&\left| - \theta(x) \int_{[0,2x_1]\times [2x_2,\infty)} \frac {x_2-y_2}{|x - y|^{2+\alpha}} - \frac {x_2-y_2}{|x - \tilde{y}|^{2+\alpha}} \,dy \right| \\
		&\quad= \left| -\theta(x) \int_{[0,2x_1] \times [2x_2,\infty)}  \frac {(x_2 -y_2)(g(1)-g(0))}{|x - y|^{2+\alpha}|x - \tilde{y}|^{2+\alpha}} \,dy \right| \\
		&\quad\leq Cx_1\| \theta \|_{\calC^{0,\alpha}}.
	\end{aligned}
\end{equation*} 
On $[0, 2x_1] \times [2x_2 , \infty)$, we have $|x-y|^{\alpha} \geq (y_2-x_2)^{\alpha} \geq x_2^{\alpha}$. Then  we estimate this term as we did for $I_1$ on the region $[0,2x_1]\times [0,\infty)$:
\begin{equation*}
	\begin{aligned}
		&\left| -\theta(x) \int_{[0,2x_1] \times [2x_2,\infty)}  \frac {(x_2 -y_2)(g(1)-g(0))}{|x - y|^{2+\alpha}|x - \tilde{y}|^{2+\alpha}} \,dy \right| \\
		&\quad\leq Cx_1\| \theta \|_{\calC^{0,\alpha}} \int_{2x_2}^{\infty} \frac {x_1}{x_1^2 + (x_2-y_2)^2} \lt(\int_{0}^{x_1} \frac {x_2 -y_2}{(x_1 - y_1)^2 + (x_2 - y_2)^{2}} \,dy_1 \rt) dy_2 \\
		&\quad\leq Cx_1\| \theta \|_{\calC^{0,\alpha}} \int_{2x_2}^{\infty} \frac {x_1}{x_1^2 + (x_2-y_2)^2} \,dy_2 \\
		&\quad\leq Cx_1\| \theta \|_{\calC^{0,\alpha}},
	\end{aligned}
\end{equation*} which implies $$\left| II_1 \right| \leq Cx_1\| \theta \|_{\calC^{0,\alpha}}.$$
	
	For  $I_2$, we redefine the function $g$ by $g(t) = |y-(\bar{x} - (1-t)(\bar{x}+x))|^{2+\alpha}$. Then one has 
	\begin{align*}
		|\bar x - y|^{2+\alpha} - |x + y|^{2+\alpha} 
		&= g(1)-g(0) \\
		&= -2(2+\alpha)x_1(\tau(y_1+x_1) + (1-\tau)(y_1- x_1)) |\tau(y-\bar{x}) + (1-\tau)(y+x)|^{\alpha} 
	\end{align*} 
	for some $\tau \in (0,1)$ and with this we proceed similarly to the estimates for $I_1$ to get 
	\[
	\begin{aligned}
		\left| I_2 - 2(2+\alpha)x_1 \int_{Q(x)} \frac {y_1y_2}{|y|^{4+\alpha}} \theta(y) \,dy \right| \leq Cx_1\| \theta \|_{\calC^{0,\alpha}} \left( 1+ \log \frac {x_1^2 + x_2^2}{x_1^2} \right)
	\end{aligned}\]
	due to $|\bar x - y| = |x - \bar y|$. 
	
	For $II_2$,  we use $|g(1)-g(0)| \leq Cx_1(x_1+y_1)|x+y|^{\alpha}$, $|x+y| \geq |x|$, and $|x-\bar{y}| \geq x_2 + y_2$, on the region $[0,\infty) \times [0,2x_2]$ to have
\begin{equation*}
	\begin{aligned}
		&\left| -\theta(x) \int_{[0,\infty) \times [0,2x_2]}  \frac {(x_2 +y_2)(h(1)-h(0))}{|x + y|^{2+\alpha}|x - \bar{y}|^{2+\alpha}} \,dy \right| \\
		&\quad \leq C\frac {x_1}{|x|} \|\theta\|_{\calC^{0,\alpha}} \int_0^{2x_2} \lt( \int_0^{\infty}  \frac {(x_2 +y_2)}{(x_1-y_1)^2 + (x_2+y_2)^2} \,dy_1 \rt) dy_2 \\
		&\quad \leq C\frac {x_1}{|x|} \| \theta \|_{\calC^{0,\alpha}} \int_0^{2x_2} 1 \,dy_2 \\
		&\quad \leq Cx_1 \| \theta \|_{\calC^{0,\alpha}}.
	\end{aligned}
\end{equation*}
Since the other parts of the integral can be treated similarly to the estimates for $II_1$, we have
	\[
		\left| II_2 \right| \leq Cx_1\| \theta \|_{\calC^{0,\alpha}}.
	\]
Therefore, we combine all the above estimates to obtain \eqref{u1}.

	Let $u_2 = III_1 + III_2 + IV_1 + IV_2$, where	
	\begin{equation*}
		\begin{aligned}
			III_1 &:= \int_{\R^2} \left( \frac {x_1 -y_1}{|x - y|^{2+\alpha}} - \frac {x_1 -y_1}{|x - \bar{y}|^{2+\alpha}} \right) (\theta(y) -\theta(x)) \,dy, \\
			III_2 &:= \int_{\R^2} \left( \frac {x_1 + y_1}{|x + y|^{2+\alpha}} - \frac {x_1 + y_1}{|x - \tilde{y}|^{2+\alpha}} \right) (\theta(y) -\theta(x)) \,dy, \\
			IV_1 &:= \theta(x) \int_{\R^2} \left( \frac {x_1 -y_1}{|x - y|^{2+\alpha}} - \frac {x_1 -y_1}{|x - \bar{y}|^{2+\alpha}} \right) dy,  \quad \mbox{and}\\
			IV_2 &:= \theta(x) \int_{\R^2} \left( \frac {x_1 + y_1}{|x + y|^{2+\alpha}} - \frac {x_1 + y_1}{|x - \tilde{y}|^{2+\alpha}} \right) dy.
		\end{aligned}
	\end{equation*} 
	Then, we can show that 
	\[
	\begin{aligned}
	&\left| III_1 + 2(2+\alpha)x_2 \int_{Q(x)} \frac {y_1y_2}{|y|^{4+\alpha}} \theta(y) \,dy \right| \leq Cx_2\| \theta \|_{\calC^{0,\alpha}} \left( 1+ \log \frac {x_1^2 + x_2^2}{x_2^2} \right),\\
	&\left| III_2 + 2(2+\alpha)x_2 \int_{Q(x)} \frac {y_1y_2}{|y|^{4+\alpha}} \theta(y) \,dy \right| \leq Cx_2\| \theta \|_{\calC^{0,\alpha}} \left( 1+ \log \frac {x_1^2 + x_2^2}{x_2^2} \right),
	\end{aligned}
	\]
	 and
	 \[
	 \left| IV_1 \right| + \left| IV_2 \right| \leq Cx_2 \| \theta \|_{\calC^{0,\alpha}},\]
	 which yields \eqref{u2}. Since the proof is parallel to that of $u_1$ by changing the role of the first and the second components, we omit the details of that. This completes the proof.
\end{proof}

\subsection{Proof of Theorem~\ref{thm:inf}}\label{sec_pf}

Let $\varepsilon > 0$ and consider smooth initial data $\theta_0^{(N)} \in \calC^{\infty}_c(\mathbb{R}^2)$ given by \eqref{eq:nonexist-data} with $N \geq n_0$. Recalling Section~\ref{sec_ini}, we fix $n_0 > 0$ such that \eqref{small_ini} holds for all $N \geq n_0$. To derive a contradiction, suppose that there exists $M>0$ and $T>0$ such that each corresponding smooth solution $\theta^{(N)}(t)$ exists on the interval $[0,T]$ and
\begin{equation}\label{eq:sol-bound}
	\begin{split}
		\sup_{t \in [0,T]}\| \theta(t) \|_{ \calC^{0,\alpha}} \leq M.
	\end{split}
\end{equation}
Though it is well known that the smooth solution $\theta^{(N)}(t)$ actually exists at least on the interval $[0,T^{(N)}]$, it is unclear the lifespan of $\theta^{(N)}$ is enough to generate $\calC^{0,\alpha}$--norm inflation (note that $T^{(N)} \to 0$ as $N \to \infty$). This is why the lifespan of $\theta^{(N)}(t)$ is included in our assumptions. For simplicity, we assume that $M\ge 1$ and $MT \ll 1$. Since $\theta_0^{(N)}$ satisfies the odd-odd symmetry, the uniqueness result in Section~\ref{uniq_holder} guarantees that $\theta^{(N)}(t)$ should be odd-odd symmetric on $[0,T]$.

We recall from Lemma \ref{log_lip_gen} that 
\[
|u(t,x) - u(t,y)| \lesssim M |x-y| \left( 10 + \log \frac {1}{|x-y|} \right).
\]
 Then, the forward chateristic flow $\Phi(t;0,\cdot) : \R^2 \to \R^2$ defined as a solution to  
 \[
 \frac {d}{dt} \Phi(t;0,x) = u(t,\Phi(t;0,x)), \quad \Phi(0;0,x) = x
 \] 
 is well-defined, and there exists a constant $C>0$ such that for any $t \in [0,T]$,
 \[
 |x-y|^{\exp(CMt)} \le |\Phi(t;0,x)-\Phi(t;0,y)| \le |x-y|^{\exp(-CMt)}
 \] 
 holds for all $x$, $y \in \R^2$ with $|x-y|\le 1$ and 
 \[
|x-y|e^{-CMt}  \le |\Phi(t;0,x)-\Phi(t;0,y)| \le |x-y| e^{CMt}
 \] 
 holds for all $x$, $y \in \R^2$ with $|x-y|> 1$.
  This implies $\operatorname{supp} \theta(t,\cdot) \subset B(0,R)$ on $[0,T]$ for some $R>0$. Moreover, due to the odd-odd symmetry and the continuity of $\theta(t,\cdot)$, the solution vanishes at the axes and it is conserved along the characteristics:
\[
\theta(t,\Phi(t;0,x)) = \theta_0(x), \qquad x \in \R^2\,\, \mbox{and}\,\, t \in [0,T].
\]
We also observe that the solution $\theta$ satisfies the assumptions in Lemma~\ref{key_lem}. Thus we deduce from \eqref{u1} that
\begin{equation}\label{phi1}
	\begin{aligned}
		\left| \frac {d}{dt} \log \Phi_1(t;0,x) - 4(2+\alpha)\int_{Q(\Phi(t;0,x))} \frac { y_1 y_2 }{|y|^{4+\alpha}} \theta(y) \,dy\right| \le CM \log \left( e + \frac{\Phi_2(t;0,x)}{\Phi_1(t;0,x)} \right)
	\end{aligned}
\end{equation} and
\begin{equation}\label{phi2}
	\begin{aligned}
		\left| \frac {d}{dt} \log \Phi_2(t;0,x) + 4(2+\alpha)\int_{Q(\Phi(t;0,x))} \frac { y_1 y_2 }{|y|^{4+\alpha}} \theta(y) \,dy\right| \le CM \log \left( e + \frac{\Phi_1(t;0,x)}{\Phi_2(t;0,x)} \right)
	\end{aligned}
\end{equation}
for any $x \in \R^2$ with $x_1, x_2>0$. Here $C>0$ is independent of $M$.

\begin{lemma}\label{lem_back}
	For any given $\gamma>1$ and $K \geq 1$, there exists $T=T(M,\gamma,K) > 0$ such that
	\begin{equation}\label{back}
	\begin{aligned}
		\frac {\Phi_2(t;0,x)}{\Phi_1(t;0,x)} \leq \gamma \frac {x_2}{x_1}, \qquad t \in [0,T]
	\end{aligned}
\end{equation} for all $x \in \R^2$ with $x_1>0$, $x_2>0$, and $\frac {1}{K} \leq \frac {x_2}{x_1} \leq K$.
\end{lemma}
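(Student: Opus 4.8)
The plan is to track the logarithm of the ratio of the two components of the forward flow,
\[
w(t):=\log\frac{\Phi_2(t;0,x)}{\Phi_1(t;0,x)},
\]
for a fixed $x$ with $x_1,x_2>0$ and $\tfrac1K\le\tfrac{x_2}{x_1}\le K$. Since the odd--odd velocity has $u_1=0$ on $\{x_1=0\}$ and $u_2=0$ on $\{x_2=0\}$, the coordinate axes are invariant under the flow, so $\Phi_1(t;0,x),\Phi_2(t;0,x)>0$ for all $t\in[0,T]$; as $u$ is bounded and continuous in $t$, these are finite and $C^1$ in $t$, and hence so is $w$. Subtracting \eqref{phi2} from \eqref{phi1} gives
\[
\frac{d}{dt}w(t)=-8(2+\alpha)J(t)+\rho(t),\qquad J(t):=\int_{Q(\Phi(t;0,x))}\frac{y_1y_2}{|y|^{4+\alpha}}\,\theta(t,y)\,dy,
\]
with an error $|\rho(t)|\le CM\bigl(\log(e+e^{w(t)})+\log(e+e^{-w(t)})\bigr)$, where $C$ is independent of $M$ and $x$.

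The crucial observation, which I expect to be the main point of the argument, is that $J(t)\ge0$. The initial datum $\theta_0^{(N)}$ from \eqref{eq:nonexist-data} is a finite sum of nonnegative bumps on the open first quadrant, so it is nonnegative there; the flow $\Phi(t;0,\cdot)$ is a bijection of the open first quadrant (again by invariance of the axes), and $\theta(t,\cdot)=\theta_0^{(N)}\circ\Phi(t;0,\cdot)^{-1}$, so $\theta(t,\cdot)\ge0$ on $(0,\infty)^2$. Since $Q(\Phi(t;0,x))=[2\Phi_1(t;0,x),\infty)\times[2\Phi_2(t;0,x),\infty)\subset(0,\infty)^2$ and the kernel $y_1y_2/|y|^{4+\alpha}$ is nonnegative on that region, $J(t)\ge0$. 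Discarding this favourable ``hyperbolic'' term leaves the one-sided bound $\frac{d}{dt}w(t)\le CM\bigl(\log(e+e^{w(t)})+\log(e+e^{-w(t)})\bigr)$. Without the sign of $J$ one would only have $|J(t)|\le CM\log\bigl(R/|\Phi(t;0,x)|\bigr)$ (using that $\theta(t,\cdot)$ vanishes on the axes), which degenerates as $x\to0$ along the bubble scales and would not produce a time $T$ uniform in $x$; so this sign is genuinely essential.

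To close the estimate I would work with the truncation $v(t):=\bigl(w(t)-w(0)\bigr)_+$, which satisfies $v(0)=0$ and is locally Lipschitz. On the set where $w(t)>w(0)$ one has $v(t)=w(t)-w(0)\ge0$ and $w(t)>w(0)\ge-\log K$, so, using $|w(0)|\le\log K$, elementary algebra gives $\log(e+e^{w(t)})\le v(t)+\log K+C_0$ and $\log(e+e^{-w(t)})\le\log(e+K)\le\log K+C_0$ for an absolute constant $C_0$; on the complementary set $v$ vanishes identically. Hence, for a.e. $t\in[0,T]$,
\[
v'(t)\le C_1M\bigl(1+\log K+v(t)\bigr),
\]
and Gr\"onwall's inequality yields $v(t)\le(1+\log K)\bigl(e^{C_1Mt}-1\bigr)$. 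Consequently
\[
\frac{\Phi_2(t;0,x)}{\Phi_1(t;0,x)}=e^{w(0)}e^{w(t)-w(0)}\le\frac{x_2}{x_1}e^{v(t)}\le\frac{x_2}{x_1}\exp\!\Bigl((1+\log K)\bigl(e^{C_1Mt}-1\bigr)\Bigr),
\]
and choosing $T=T(M,\gamma,K):=\frac{1}{C_1M}\log\!\bigl(1+\tfrac{\log\gamma}{1+\log K}\bigr)>0$ makes the exponent at most $\log\gamma$ on $[0,T]$, which is exactly \eqref{back}.

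Apart from establishing $J\ge0$, the only technical care is in this last step: the $(\,\cdot\,)_+$-truncation is what lets the differential inequality close without an a priori two-sided bound on $w$ (recall $J\ge0$ controls $w$ only from above, while $w$ could a priori drift strongly negative). Everything else is routine — the flow estimates and the expansions \eqref{phi1}--\eqref{phi2} are already in hand, and the constants $C$, $C_0$, $C_1$ are independent of $M$ and $x$, so the resulting $T$ depends only on $M$, $\gamma$, and $K$, as claimed.
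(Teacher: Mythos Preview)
Your argument is correct and follows the paper's approach: both subtract \eqref{phi1} from \eqref{phi2}, discard the favourable $-8(2+\alpha)J(t)$ term using $J\ge0$ (the paper does this without comment), and close with a Gr\"onwall-type bound on $\log(\Phi_2/\Phi_1)$. The only difference is cosmetic---you use the truncation $v=(w-w(0))_+$ while the paper runs a continuation argument on an interval where $\Phi_2/\Phi_1\ge\tfrac{\gamma}{2}\tfrac{x_2}{x_1}$---and your explicit justification of $J\ge0$ via the invariance of the first quadrant under the flow is a useful clarification that the paper omits.
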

\begin{proof}
From \eqref{phi1} and \eqref{phi2}, we have $$\frac {d}{dt} \log \frac {\Phi_2(t;0,x)}{\Phi_1(t;0,x)} \leq CM \left( \log \left( e + \frac{\Phi_2(t;0,x)}{\Phi_1(t;0,x)} \right) + \log \left( e + \frac{\Phi_1(t;0,x)}{\Phi_2(t;0,x)} \right) \right).$$ 
Note that \eqref{back} holds for $t=0$. Assume that there exists $[t_1,t_2] \subset [0,T]$ such that $\frac {\Phi_2(t;0,x)}{\Phi_1(t;0,x)} \geq \frac {\gamma}{2} \frac {x_2}{x_1}$ on $[t_1,t_2]$. Then, it follows that 
\[
\frac {d}{dt} \log \frac {\Phi_2(t;0,x)}{\Phi_1(t;0,x)} \leq CM + CM \log \frac {\Phi_2(t;0,x)}{\Phi_1(t;0,x)}.
\]
Thus, by Gr\"onwall's lemma, we have 
	\[
	\log \frac {\Phi_2(t;0,x)}{\Phi_1(t;0,x)} \leq e^{CMT} \lt(\log \lt(\frac {\gamma}{2} \frac {x_2}{x_1}\rt) + CMT\rt), \qquad t \in [t_1,t_2].
	\] 
	By taking $T>0$ sufficiently small and using the classical continuation argument, we obtain the desired estimate.
\end{proof}

Now, let $\Omega_n := \operatorname{supp} \theta_{0,loc}^{(n)}$ for $n \geq n_0$, and take $\gamma>1$. Since $ x=(x_1, x_2) \in \Omega_n$ satisfies
\[
\frac13 = \frac{4^{-n-2} - 4^{-n}\cdot \frac{1}{32}}{4^{-n-2} + 4^{-n}\cdot \frac{1}{32}} \le \frac{x_2}{x_1} \le \frac{4^{-n-2} + 4^{-n}\cdot \frac{1}{32}}{4^{-n-2} - 4^{-n}\cdot \frac{1}{32}} = 3,
\]
we get \eqref{back} for any  $x \in \Omega_n$ with $n \geq n_0$. By combining this with Remark~\ref{rmk_R}, we have
\begin{equation}\label{phi1'}
	\begin{aligned}
		\left| \frac {d}{dt} \log \Phi_1(t;0,x) - 4(2+\alpha)\int_{R(\Phi(t;0,x))} \frac { y_1 y_2 }{|y|^{4+\alpha}} \theta(y) \,dy\right| \le CM, \qquad x \in \Omega_n.
	\end{aligned}
\end{equation} Similarly, we can obtain
\begin{equation}\label{phi2'}
	\begin{aligned}
		\left| \frac {d}{dt} \log \Phi_2(t;0,x) + 4(2+\alpha)\int_{R(\Phi(t;0,x))} \frac { y_1 y_2 }{|y|^{4+\alpha}} \theta(y) \,dy\right| \le CM \log \left( e + \frac{\Phi_1(t;0,x)}{\Phi_2(t;0,x)} \right), \qquad x \in \Omega_n.
	\end{aligned}
\end{equation}
From \eqref{phi1'}, by employing the analogous argument to \cite[Claim I]{JK24} or \cite[Lemma~4.1]{JK22}, we deduce
\begin{equation}\label{order}
	\begin{aligned}
		2\sup_{x \in \cup_{n > N} \Omega_n} \Phi_1(t;0,x) \leq \inf_{x \in \Omega_N} \Phi_1(t;0,x), \qquad t \in [0,T]
	\end{aligned}
\end{equation} for any $N \geq n_0$.


In the lemma below, we present the comparability between Eulerian and Lagrangian variables. 

\begin{lemma}
	Let $\gamma>1$. Then for each $n_0 \leq n \leq N$, there exists $T_{n} \in (0,T]$ with $T_{n} \gtrsim \frac{1-\beta}{n^{1-\beta}}$ for sufficiently large $n$ such that 
	\begin{equation}\label{eqv_est}
	\begin{aligned}
		\frac {1}{\gamma} \leq \frac {\Phi_1(t;0,x)}{x_1} \leq \gamma, \qquad \frac {1}{\gamma} \leq \frac {\Phi_2(t;0,x)}{x_2} \leq \gamma, \qquad t \in [0,T_n]
	\end{aligned}
\end{equation} for any $x \in \Omega_n$.
\begin{proof}
	We proceed by induction argument and start with $n = n_0$. For every $x \in \Omega_n$, we first deduce from \eqref{order} that $\theta(t,y) = 0$ for any $t \in [0,T]$ and $y \in R(\Phi(t;0,x))$. We combine this with \eqref{phi1'} and \eqref{phi2'} to get 
	\[
	\frac {d}{dt} \log \frac {\Phi_1(t;0,x)}{\Phi_2(t;0,x)} \leq CM \log \left( e + \frac{\Phi_1(t;0,x)}{\Phi_2(t;0,x)} \right).
	\]
	 Then applying Gr\"onwall's lemma yields
	 \[
	 \frac {\Phi_1(t;0,x)}{\Phi_2(t;0,x)} \leq C, \qquad t \in [0,T].
	 \]
	  Thus, we apply this to \eqref{phi1'} and \eqref{phi2'}  and obtain
	  \[
	  \left| \frac {d}{dt} \log \Phi_1(t;0,x) \right| + \left| \frac {d}{dt} \log \Phi_2(t;0,x) \right| \leq CM.
	  \]
	  By taking $T_{n_0} = T$ and using $T M \ll 1$, we obtain \eqref{eqv_est} with $n=n_0$.
	
	Suppose that \eqref{eqv_est} holds for $n=n_0, \dots, N-1$ with the corresponding time $T_{n}$. Let $n=N$. From \eqref{phi1'} and \eqref{phi2'}, we have \begin{equation*}
	\begin{aligned}
		\frac {d}{dt} \log \frac {\Phi_1(t;0,x)}{\Phi_2(t;0,x)} &\leq C\int_{R(\Phi(t;0,x))} \frac { y_1 y_2 }{|y|^{4+\alpha}} \theta(y) \,dy + CM \log \left( e + \frac{\Phi_1(t;0,x)}{\Phi_2(t;0,x)} \right)\\
		&= C \sum_{m<N}\int_{\Omega_m} \frac {\Phi_1(t;0,y) \Phi_2(t;0,y)}{|\Phi(t;0,y)|^{4+\alpha}} \theta_0(y) \,dy + CM \log \left( e + \frac{\Phi_1(t;0,x)}{\Phi_2(t;0,x)} \right) \\
		&\leq C \sum_{m<N} \int_{\Omega_m} \frac {y_1y_2}{|y|^{4+\alpha}} \theta_0(y) \,dy + CM \log \left( e + \frac{\Phi_1(t;0,x)}{\Phi_2(t;0,x)} \right) \\
		&= C \sum_{m<N} m^{-\beta} + CM \log \left( e + \frac{\Phi_1(t;0,x)}{\Phi_2(t;0,x)} \right),
	\end{aligned}
\end{equation*} where we used \eqref{order} and the induction hypothesis at the last inequality. We now take 
\[
T_N = \min \lt\{ T, \ \frac{C}{\sum_{m < N} m^{-\beta}} \rt\}. 
\]
By a similar argument, we also obtain
\[
\log \frac {\Phi_1(t;0,x)}{\Phi_2(t;0,x)} \leq C, \qquad t \in [0,T_N],
\]
and this concludes our desired estimate.
\end{proof}
\end{lemma}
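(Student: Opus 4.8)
The plan is to prove \eqref{eqv_est} by induction on $n$, running from $n=n_0$ up to $n=N$, with each level producing its own time $T_n$ and the times nested, $T_{n_0}\ge T_{n_0+1}\ge\cdots\ge T_N$, so that the induction hypothesis at a level $m<N$ is available on all of $[0,T_N]$. The inputs are the Lagrangian ODEs \eqref{phi1'}--\eqref{phi2'}, the rigid ordering \eqref{order}, the fact that the flow $\Phi(\cdot;0,\cdot)$ is measure preserving ($\nabla\cdot u=0$) and transports $\theta$ (so $\theta(t,\Phi(t;0,z))=\theta_0(z)$), and the elementary scalings $\|\theta^{(m)}_{0,loc}\|_{L^\infty}\simeq 4^{-\alpha m}$, $|\Omega_m|\simeq 4^{-2m}$, $|y|\simeq 4^{-m}$ on $\Omega_m$, which give $\int_{\Omega_m}\frac{y_1y_2}{|y|^{4+\alpha}}\theta^{(m)}_{0,loc}(y)\,dy\simeq 1$ and $\sum_{m<N}m^{-\beta}\simeq\frac{N^{1-\beta}}{1-\beta}$ for large $N$.

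\textit{Base case $n=n_0$.} Since $\Omega_{n_0}$ is the outermost bubble, \eqref{order} (with $N=n_0$) places every transported bubble $\Phi(t;0,\Omega_m)$, $m>n_0$, strictly to the left of the half-strip $R(\Phi(t;0,x))=[2\Phi_1(t;0,x),\infty)\times[0,\infty)$ for all $x\in\Omega_{n_0}$, $t\in[0,T]$; the only mass of $\theta(t,\cdot)$ meeting $R(\Phi(t;0,x))$ comes from $\Phi(t;0,\Omega_{n_0})$ itself, and by the scalings above this self-interaction contributes $O(M)$ to $\int_{R(\Phi(t;0,x))}\frac{y_1y_2}{|y|^{4+\alpha}}\theta\,dy$ (shrinking $T$ in terms of $n_0$ if needed so that $\Phi(t;0,\Omega_{n_0})$ stays at scale $4^{-n_0}$). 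Then \eqref{phi1'}--\eqref{phi2'} give $\frac{d}{dt}\log\frac{\Phi_1}{\Phi_2}\le CM\bigl(1+\log(e+\frac{\Phi_1}{\Phi_2})\bigr)$; writing $w:=\log(e+\Phi_1/\Phi_2)\ge1$ this reads $\dot w\le CMw$, and since $x_1/x_2\in[\frac13,3]$ on $\Omega_{n_0}$ we obtain $w(t)\le w(0)e^{CMt}\le C$ on $[0,T]$, i.e. $\Phi_1/\Phi_2\le C$. Inserting this back into \eqref{phi1'}--\eqref{phi2'} bounds $|\frac{d}{dt}\log\Phi_i|\le CM$, and integrating over $[0,T]$ with $MT$ small enough (in terms of $\gamma$ and $n_0$) yields \eqref{eqv_est} with $T_{n_0}=T$.

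\textit{Inductive step.} Assume \eqref{eqv_est} for $n_0,\dots,N-1$ with times $T_{n_0}\ge\cdots\ge T_{N-1}$, and fix $x\in\Omega_N$. Using \eqref{order} at each level $m<N$ shows every outer image $\Phi(t;0,\Omega_m)$, $m<N$, lies inside $R(\Phi(t;0,x))$, while \eqref{order} at level $N$ keeps all inner bubbles ($m>N$) strictly to its left; only $\Phi(t;0,\Omega_N)$ itself may overlap $R(\Phi(t;0,x))$, and, by a standard continuation argument on $[0,T_N]$ keeping $\Phi(t;0,\Omega_N)$ at scale $4^{-N}$, that self-interaction is $O(M)$. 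Hence, changing variables by measure preservation and transport and writing $\theta_0|_{\Omega_m}=m^{-\beta}\theta^{(m)}_{0,loc}$,
\begin{align*}
\int_{R(\Phi(t;0,x))}\frac{y_1y_2}{|y|^{4+\alpha}}\theta(y)\,dy
&\le CM+\sum_{m<N}\int_{\Omega_m}\frac{\Phi_1(t;0,y)\Phi_2(t;0,y)}{|\Phi(t;0,y)|^{4+\alpha}}\theta_0(y)\,dy\\
&\le CM+C\sum_{m<N}m^{-\beta},
\end{align*}
where the second inequality uses the induction hypothesis ($\Phi_i(t;0,y)\simeq y_i$ on $\Omega_m$ for $m<N$, valid on $[0,T_{N-1}]\supseteq[0,T_N]$) together with $\int_{\Omega_m}\frac{y_1y_2}{|y|^{4+\alpha}}\theta^{(m)}_{0,loc}\simeq1$. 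Now \eqref{phi1'}--\eqref{phi2'} give $\frac{d}{dt}\log\frac{\Phi_1}{\Phi_2}\le C\sum_{m<N}m^{-\beta}+CM\log(e+\frac{\Phi_1}{\Phi_2})$; the substitution $w=\log(e+\Phi_1/\Phi_2)$ turns this into $\dot w\le C\sum_{m<N}m^{-\beta}+CMw$, and Grönwall on $[0,T_N]$ with $T_N:=\min\{T,\,c_0(\sum_{m<N}m^{-\beta})^{-1}\}$ (for a small constant $c_0>0$ to be fixed) yields $\Phi_1/\Phi_2\le C$ there. Substituting back bounds $|\frac{d}{dt}\log\Phi_i|\le C\sum_{m<N}m^{-\beta}+CM$, and integrating over $[0,T_N]$ — so that $T_N\sum_{m<N}m^{-\beta}\le c_0$ — gives $|\log\Phi_i(t)-\log x_i|\le Cc_0+CMT\le\log\gamma$ once $c_0$ and $MT$ are small relative to $\gamma$; this is \eqref{eqv_est}. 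Finally $\sum_{m<N}m^{-\beta}\le\frac{N^{1-\beta}}{1-\beta}+O(1)$ for large $N$, so $T_N=c_0(\sum_{m<N}m^{-\beta})^{-1}\gtrsim\frac{1-\beta}{N^{1-\beta}}$, which is $\le T$ once $N$ is large, so the minimum is attained there.

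\textit{Expected main obstacle.} The delicate point is the bookkeeping of $\int_{R(\Phi(t;0,x))}\frac{y_1y_2}{|y|^{4+\alpha}}\theta$: one must invoke the rigid ordering \eqref{order} at all levels $m\le N$ simultaneously to peel off exactly the outer bubbles, and then check that the self-interaction of the bubble containing $x$ is genuinely harmless ($O(M)$, absorbed since $M\ge1$), which needs a short continuation argument because $\Phi(t;0,\Omega_N)$ need not be disjoint from $R(\Phi(t;0,x))$. The remainder is careful tracking of constants so that all the smallnesses ($MT$ small, $c_0$ small, the nesting $T_{n_0}\ge\cdots\ge T_N$) survive and deliver the asserted lower bound $T_n\gtrsim(1-\beta)\,n^{-(1-\beta)}$; the two Grönwall steps are mildly nonlinear because of the $\log(e+\Phi_1/\Phi_2)$ term but linearize at once under $w=\log(e+\Phi_1/\Phi_2)$ and are otherwise routine.
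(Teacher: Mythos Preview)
Your argument is correct and follows the same induction strategy as the paper's proof, combining \eqref{order}, \eqref{phi1'}--\eqref{phi2'}, the Lagrangian change of variables, and Gr\"onwall in the same way, with the same choice $T_N=\min\{T,\,c/\sum_{m<N}m^{-\beta}\}$. If anything you are slightly more careful than the paper: you explicitly isolate and bound the self-interaction of the bubble $\Phi(t;0,\Omega_n)$ with $R(\Phi(t;0,x))$ as $O(M)$ via a continuation argument, whereas the paper's proof simply asserts $\theta(t,y)=0$ on $R(\Phi(t;0,x))$ in the base case and writes the Eulerian integral as an equality with $\sum_{m<N}$ in the inductive step, tacitly dropping this (harmless, $O(1)$) contribution.
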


Now, we are ready to prove Theorem~\ref{thm:inf}. We take any $x \in \Omega_n$ with $n \gg n_0$ and combination of \eqref{phi1'} and \eqref{phi2'} gives
\begin{equation*}
	\begin{aligned}
		\frac {d}{dt} \log \frac{\Phi_1(t;0,x)}{\Phi_2(t;0,x)} &\leq 8(2+\alpha)\int_{R(\Phi(t;0,x))} \frac {y_1y_2}{|y|^{4+\alpha}} \theta(t,y) \,d y + CM \log \left( e+ \frac {\Phi_1(t;0,x)}{\Phi_2(t;0,x)} \right) \\
		&\leq C\int_{R(\Phi(t;0,x))} \frac {y_1y_2}{|y|^{4+\alpha}} \theta(t,y) \,d y + CM \log \left( e+ \frac {\Phi_1(t;0,x)}{\Phi_2(t;0,x)} \right).
	\end{aligned}
\end{equation*} If $\frac {\Phi_1(t;0,x)}{\Phi_2(t;0,x)} \leq e$, we find
\[
\frac {d}{dt} \log \frac{\Phi_1(t;0,x)}{\Phi_2(t;0,x)} \leq C\int_{R(\Phi(t;0,x))} \frac {y_1y_2}{|y|^{4+\alpha}} \theta(t,y) \,d y + CM,
\] 
and if $\frac {\Phi_1(t;0,x)}{\Phi_2(t;0,x)} \geq e$, we get
\[
\frac {d}{dt} \log \frac{\Phi_1(t;0,x)}{\Phi_2(t;0,x)} \leq C\int_{R(\Phi(t;0,x))} \frac {y_1y_2}{|y|^{4+\alpha}} \theta(t,y) \,d y + CM \log \frac {\Phi_1(t;0,x)}{\Phi_2(t;0,x)}.
\]
Applying Gr\"onwall's lemma to the above implies
\[
\log \frac{\Phi_1(t;0,x)}{\Phi_2(t;0,x)} \leq \left( \log \frac {x_1}{x_2} + C\int_0^t \int_{R(\Phi(t;0,x))} \frac {y_1y_2}{|y|^{4+\alpha}} \theta(t,y) \,d y dt + CMt \right) e^{CMt}.
\] 
Hence, we deduce from \eqref{phi2'} that 
\begin{equation*}
	\begin{aligned}
		\frac {d}{dt} \log \Phi_2(t;0,x) &\leq -4(2+\alpha) \int_{R(\Phi(t;0,x))} \frac {y_1y_2}{|y|^{4+\alpha}} \theta(t,y) \,d y + CM \log \left( e + \frac {\Phi_1(t;0,x)}{\Phi_2(t;0,x)} \right) \\
		&\leq -4(2+\alpha) \int_{R(\Phi(t;0,x))} \frac {y_1y_2}{|y|^{4+\alpha}} \theta(t,y) \,d y + C\int_0^t \int_{R(\Phi(t;0,x))} \frac {y_1y_2}{|y|^{4+\alpha}} \theta(t,y) \,d y dt + CM.
	\end{aligned}
\end{equation*}
This yields
 \begin{equation*}
	\begin{aligned}
		\log \frac{\Phi_2(T_{\sqrt{n}},x)}{x_2} &\leq -4(2+\alpha) \int_0^{T_{\sqrt{n}}} \int_{R(\Phi(t;0,x))} \frac {y_1y_2}{|y|^{4+\alpha}} \theta(t,y) \,d y dt \cr
		&\quad + CT\int_0^{T_{\sqrt{n}}}\int_{R(\Phi(t;0,x))} \frac {y_1y_2}{|y|^{4+\alpha}} \theta(t,y) \,d y dt + CMT \\
		&\leq -2 \int_0^{T_{\sqrt{n}}} \int_{R(\Phi(t;0,x))} \frac {y_1y_2}{|y|^{4+\alpha}} \theta(t,y) \,d y dt + CMT.
	\end{aligned}
\end{equation*} 
Due to the nonnegativity of $\theta$ on $R(\Phi(t;0,x))$, we see that 
\begin{equation*}
	\begin{aligned}
		-2 \int_0^{T_{\sqrt{n}}} \int_{R(\Phi(t;0,x))} \frac {y_1y_2}{|y|^{4+\alpha}} \theta(t,y) \,d y dt &\leq -2 \sum_{\sqrt{n}<k<n} \int_0^{T_k} \int_{\Phi(t;0,\Omega_k)} \frac {y_1y_2}{|y|^{4+\alpha}} \theta(t,y) \,d y dt \\
		&\leq - \sum_{\sqrt{n}<k<n} ck^{-\beta} T_k \\
		&\leq - c(1-\beta) \log n,
	\end{aligned}
\end{equation*}
where we used the lower bound of $T_k$ at the last inequality. Thus, we obtain
\[
\frac {\Phi_2(T_{\sqrt{n}},x)}{x_2} \leq C n^{-c(1-\beta)}.
\]
Here, we observe that 
 \[
 \| \theta(T_{\sqrt{n}},\cdot) \|_{\calC^{0,\alpha}} \geq \frac {\theta(T_{\sqrt{n}},\Phi(T_{\sqrt{n}},x)) - \theta(T_{\sqrt{n}},\Phi_1(T_{\sqrt{n}},x),0)}{\Phi_2(T_{\sqrt{n}},x)^{\alpha}} = \frac {\theta_0(x)-\theta_0(x_1,0)}{x_2^{\alpha}} \left( \frac {x_2}{\Phi_2(T_{\sqrt{n}},x)} \right)^{\alpha}.
 \]
  We fix $x = (4^{-n-2},4^{-n-2})$. Since 
  \[
  \frac {\theta_0(x)-\theta_0(x_1,0)}{x_2^{\alpha}} \simeq n^{-\beta},
  \]
  we can get 
  \[
  \frac {\theta_0(x)-\theta_0(x_1,0)}{x_2^{\alpha}} \left( \frac {x_2}{\Phi_2(T,x)} \right)^{\alpha} \gtrsim n^{-\beta} n^{c\alpha(1-\beta)} \gtrsim n^{c\alpha - (1+c\alpha)\beta}.
  \] 
  Finally, we take sufficiently small $\beta$ such that $c\alpha-(1+c\alpha)\beta =: \eta > 0$ and consider $N$ with $N \geq n_0^2$, $N \geq M^{2/\eta}$ and $T_{\sqrt{N}} \simeq \frac{1-\beta}{N^{1-\beta}} \leq T$. This yields $\| \theta(T_{\sqrt{N}},\cdot) \|_{\calC^{0,\alpha}} \geq M^2$ that contradicts \eqref{eq:sol-bound}. This completes the proof for Theorem~\ref{thm:inf}.

%
%
%
%
%
%

\section*{Acknowledgments}
The work of Y.-P. Choi is supported by NRF grant no. 2022R1A2C1002820. The work of J. Jung is supported by the research fund of Hanyang University (HY-202400000001275). 

\section*{Conflict of interest}
The authors declare that they have no conflict of interest.

\end{document}